\theoremstyle{definition}
\newtheorem{proposition}{Proposition}
\newtheorem{theorem}{Theorem}
\newtheorem{lemma}{Lemma}
\newtheorem{remark}{Remark}
\newcommand{\esssup}{{\rm esssup}}
\newcommand{\loc}{{\rm loc}}
\newcommand{\supp}{{\rm supp}}
\newcommand{\Real}{\mbox{Re}}
\newcommand{\weak}{\rm {weak}}
\newcommand{\strong}{\rm {str}}
\newcommand{\comp}{{\rm com}}
\begin{document}

\title[Unique continuation]{Schr\"{o}dinger operators and unique continuation. Towards an optimal result}
\author{D.~Kinzebulatov}

\address{Department of Mathematics, University of Toronto, Toronto, Ontario, Canada M5S 2E4}

\email{dkinz@math.toronto.edu}

\author{L.~Shartser}

\address{Department of Mathematics, University of Toronto, Toronto, Ontario, Canada M5S 2E4}

\email{shartl@math.toronto.edu}

\subjclass[2000]{35B60, 35J10}

\keywords{Unique continuation, Schr\"{o}dinger operators}

\begin{abstract}
In this article we prove the property of unique continuation (also known for $C^\infty$ functions as quasianalyticity) for solutions of the differential inequality $|\Delta u| \leq |Vu|$ for $V$ from a wide class of potentials (including $L^{d/2,\infty}_{\loc}(\mathbb R^d)$ class)
and 
$u$ in a space of solutions $Y_V$ containing all eigenfunctions of the corresponding self-adjoint Schr\"{o}dinger operator.

Motivating question: is it true that for potentials $V$, for which self-adjoint Schr\"{o}dinger operator is well defined, the property of unique continuation holds?
\end{abstract}

\maketitle

\section{Introduction}

Let $\Omega$ be an open set in $\mathbb R^d$ ($d \geq 3$),
$X_p:=L_{\loc}^p(\Omega,dx)$ ($p \geq 1$), $H^{m,p}(\Omega)$ the standard Sobolev space and $-\Delta:=-\sum_{k=1}^d \frac{\partial^2}{\partial x_k^2}$ the Laplace operator. Let
$\mathcal D'(\Omega)$ be the space of distributions on $\Omega$ and $\mathcal L_{\loc}^{2,1}(\Omega):=\{f \in X_1: \Delta f \in \mathcal D'(\Omega) \cap X_1\}$.

Let now $\Omega$ be connected. For $Y_V \subset \mathcal L_{\loc}^{2,1}(\Omega)$ a space of functions depending on $V \in X_1$ we say that the differential inequality
\begin{equation}
\label{diffineq}
|\Delta u(x)| \leq |V(x)||u(x)| \quad \text{ a.e. in } \Omega
\end{equation}
has the property of \textit{weak} unique continuation (WUC) in $Y_V$~($=:Y_V^{\weak}$) provided that whenever $u$ in $Y_V$ satisfies inequality (\ref{diffineq}) and vanishes in an open subset of $\Omega$ it follows that $u \equiv 0$ in $\Omega$.
We also say that (\ref{diffineq}) has the property of \textit{strong} unique continuation (SUC) in $Y_V$~($=:Y_V^{\strong}$) if whenever $u$ in $Y_V$ satisfies (\ref{diffineq})
and vanishes to an infinite order at a point $x_0 \in \Omega$, i.e.,
\begin{equation*}
\lim_{\rho \to 0} \frac{1}{\rho^k} \int_{|x-x_0|<\rho} |u(x)|^2 dx=0, \text{ for all } k \in \mathbb N,
\end{equation*}
it follows that $u \equiv 0$ in $\Omega$.

Throughout our work we make use of the following notations.  $\mathbf{1}_{S}$ is the characteristic function of a set $S \subset \mathbb R^d$, $B(x_0,\rho):=\{x \in \mathbb R^d:|x-x_0|<\rho\}$ and $B_S(x_0,\rho):=B(x_0,\rho) \cap S$ (also, set $B(\rho):=B(0,\rho)$ and $B_S(\rho):=B_S(0,\rho)$), $\|A\|_{p \mapsto q}$ is the norm of operator $A:L^p(\mathbb R^d) \mapsto L^q(\mathbb R^d)$, $(-\Delta)^{-\frac{z}{2}}$, $0<\Real(z)<d$, stands for the Riesz operator whose action on a function $f \in C_0^\infty(\mathbb R^d)$ is determined by the formula
\begin{equation*}
(-\Delta)^{-\frac{z}{2}} f(x)=c_z\int_{\mathbb R^d}\left[(-\Delta)^{-\frac{z}{2}}\right](x,y) f(y)dy,
\end{equation*}
where 
\begin{equation*}
\left[(-\Delta)^{-\frac{z}{2}}\right](x,y):=|x-y|^{z-d}, \quad c_z:=\Gamma\left(\frac{d-z}{2}\right)\left(\pi^{d/2} 2^z \Gamma \left(\frac{z}{2} \right) \right)^{-1}
\end{equation*}
(see, e.g., \cite{SteBook}).

The first result on unique continuation was obtained by T.~Carleman \cite{Carl}. He proved that (\ref{diffineq}) has the WUC property in the case $d=2$, $V \in L^{\infty}_{\loc}(\Omega)$.
Since then, 
the properties of
unique continuation were extensively studied by many authors (primarily following the original Carleman's approach), with
the best possible for $L_{\loc}^p$-potentials SUC result obtained by D.~Jerison and C.~Kenig ($p=\frac{d}{2}$, $Y_V^{\strong}=H^{2,\bar{p}}_{\loc}$, $\bar{p}:=\frac{2d}{d+2}$) \cite{JK}, and its extension for $L_{\loc}^{d/2,\infty}$-potentials obtained by E.M.~Stein \cite{Ste}. 
In \cite{SS,Forese,Saw} the authors 
are proving unique continuation for potentials from 
the following `abstract' classes:

1) $V \in L^2_{\loc}(\mathbb R^d)$ satisfies $\|\mathbf{1}_{B(x,1)}V(-\Delta)^{-1}|V| \mathbf{1}_{B(x,1)}\|_{2 \mapsto 2}<\infty$ for all $B(x,1) \subset \mathbb R^d$ in \cite{SS};

2) $V \in L^2_{\loc}(\mathbb R^d)$ satisfies $\inf_{\lambda>0}\|~V(-\Delta+\lambda)^{-1}\|_{2 \mapsto 2}=0$ and $$\inf_{\lambda>0}\sup_{x \in \mathbb R^d} \|\mathbf{1}_{B(x,1)}V_1(-\Delta+\lambda)^{-3/4}\|_{2 \mapsto 2}=0$$ for all $B(x,1) \subset \mathbb R^d$, see \cite{Forese};

3) Kato class (see Section \ref{compsect})
in \cite{Saw} ($d=3$).

Further improvements of Stein's result were obtained in \cite{SawChan,RV,W}, where unique continuation is proved for potentials $V$ locally in
Campanato-Morrey class (see Section \ref{compsect}).

Our main result is that 
differential inequality (\ref{diffineq}) has the WUC property in the space of solutions
$$Y_V^{\weak}:=\left\{f \in \mathcal L^{2,1}_{\loc}: |V|^{\frac{1}{2}}f \in X_2 \right\}$$
(containing eigenfunctions of the corresponding self-adjoint Schr\"{o}dinger operator, see below)
and, respectively, the SUC property in
$$Y_V^{\strong}:=Y_V^{\weak} \cap H_{\loc}^{1,\bar{p}}(\Omega), $$
for potentials $V$ in the following class (for the motivation see (\ref{formineq99}) and (\ref{fbetaloc}) below)
\begin{equation*}
\mathcal F^d_{\beta,\loc}:=\left\{W \in X_{\frac{d-1}{2}}:
\sup_{K}\overline{\lim\limits_{\rho \to 0}} \sup_{x_0 \in K} \|\mathbf{1}_{B_K(x_0,\rho)} |W|^{\frac{d-1}{4}}(-\Delta)^{-\frac{d-1}{2}}|W|^{\frac{d-1}{4}}\mathbf{1}_{B_K(x_0,\rho)}\|_{2 \mapsto 2}\leq \beta\right\},
\end{equation*}
where $K$ is a compact subset of $\Omega$.

Historically, the most important reason for establishing the WUC property is its application to
the problem of absence of positive eigenvalues of the self-adjoint Schr\"{o}dinger operators,
discovered in 1959 by T.~Kato \cite{Kato1}. He proved
that if $V$ has a compact support, then all eigenfunctions corresponding to positive eigenvalues must vanish outside of a ball of finite radius, hence by WUC must be identically equal to zero. In what follows, we employ our WUC result for (\ref{diffineq}) to prove the absence of positive eigenvalues of the self-adjoint Schr\"{o}dinger operator $H \supset -\Delta+V$ in the complex Hilbert space $\mathcal H:=L^2(\mathbb R^d,dx)$ defined in the sense of quadratic forms (see \cite{Kato2,RS}), namely:
\begin{equation}
\label{formsum}
H:=H_+\dotplus(-V_-),
\end{equation}
where $H_+:=H_0\dotplus V_+$, $H_0=(-\Delta|_{C^\infty(\mathbb R^d)})^{\ast}$, $D(H_0)=H^{2,2}(\mathbb R^d)$, $V=V_+-V_-$, $V_{\pm} \geq 0$, $V_{\pm} \in L^1(\mathbb R^d)$ and
\begin{equation}
\label{formineq}
\inf_{\lambda>0}\left\|V_-^{\frac{1}{2}}(H_++\lambda)^{-1} V_-^{\frac{1}{2}}\right\|_{2 \mapsto 2} \leq \beta<1.
\end{equation}
The latter inequality guarantees the existence of the form sum (\ref{formsum}), see \cite[Ch.VI]{Kato2}),
and the inclusion $D(H) \subset Y_V^{\weak}$
(see Section \ref{comparisonsect}). 
On the other hand it is easy to see that if $V \in L_{\loc}^1(\mathbb R^d)$ satisfies the inequality
\begin{equation}
\label{formineq99}
\inf_{\lambda>0}\left\| |V|^{\frac{1}{2}}(H_0+\lambda)^{-1}|V|^{\frac{1}{2}}\right\|_{2 \mapsto 2} \leq \beta<1,
\end{equation}
then $V$ satisfies (\ref{formineq}) with the same $\beta$, and therefore the existence of the form sum (\ref{formsum}) follows.
The local nature of the problem of unique continuation for (\ref{diffineq}) leads to the definition of `local analogue' of potentials satisfying (\ref{formineq99}):
\begin{equation}
\label{fbetaloc}
F_{\beta,\loc}:=\left\{W \in X_1:\sup_K\overline{\lim\limits_{\rho \to 0}}\sup_{x_0 \in K}\|\mathbf{1}_{B_K(x_0,\rho)} |W|^{\frac{1}{2}}(-\Delta)^{-1}|W|^{\frac{1}{2}}\mathbf{1}_{B_K(x_0,\rho)}\|_{2 \mapsto 2} \leq \beta\right\}, 
\end{equation}
where $K$ is a compact subset of $\Omega$.
This class coincides with $\mathcal F^d_{\beta,\loc}$ if $d=3$, and contains $\mathcal F^d_{\beta,\loc}$ as a proper subclass if $d \geq 4$ (the latter easily follows from Heinz-Kato inequality, see, e.g., \cite{Kato0}). Arguments of this article do not apply to the larger class of potentials $F_{\beta,\loc}$ for $d \geq 4$.

\vspace*{4mm}

Class $\mathcal F^d_{\beta,\loc}$ contains the potentials considered in \cite{JK,Ste,Saw,SawChan,W} as proper subclasses. 
Previously WUC and SUC properties were derived only for $Y_V=H^{2,\bar{p}}_{\loc}$.
We note that though the dependence of $Y_V$ on $V$ (i.e., $u \in Y_V$ implies $|V|^{\frac{1}{2}}u \in X_2$) does not appear explicitly in the papers cited above, 
it is implicit, see Section \ref{compsect}.

\vspace*{4mm}

Following Carleman, most proofs of unique continuation rely on Carleman type estimates on the norms of the appropriate operators acting from $L^p$ to $L^q$, for certain $p$ and $q$ (e.g., Theorem 2.1 in \cite{JK}, Theorem 1 in \cite{Ste}). Our method is based on an $L^2 \mapsto L^2$ estimate of Proposition \ref{ourlem} and Lemma \ref{sawlem1}, proved in \cite{Saw}. 
In the case $d=3$ we derive Proposition \ref{ourlem} using only Lemma \ref{sawlem1}. The case $d \geq 4$ is reduced to the case $d=3$ at the cost of a more restrictive class of potentials:
the proof uses Stein's interpolation theorem for analytic families of operators \cite{SW}, and relies on Lemma \ref{sawlem2} -- a variant of pointwise inequalities considered in \cite{Saw} and \cite{Ste} (cf. Lemma 1 in \cite{Saw}, Lemma 5 in \cite{Ste}) -- and Lemma \ref{JKlem} of \cite{JK}.  \\

The results of this article have been announced in \cite{KiSh}.

\vspace*{4mm}
\noindent \textbf{Acknowledgments. }We are grateful to Yu.~A.~Semenov for introducing us to the subject of unique continuation and close guidance throughout our work on this article, and to Pierre Milman for his supervision and, in particular, help in communicating our results here.

\vspace*{2mm}

\section{Main Results}

\label{comparisonsect}

Our main results state that (\ref{diffineq}) has the WUC and SUC properties with potentials from $\mathcal F^d_{\beta,\loc}$. The difference between the results is in the classes $Y_V$ within which we look for solutions to (\ref{diffineq}).

\begin{theorem}
\label{mainthm}
There exists a sufficiently small constant $\beta<1$ such that if $V \in \mathcal F^d_{\beta,\loc}$ then (\ref{diffineq}) has the WUC property in  $Y_V^{\weak}$.
\end{theorem}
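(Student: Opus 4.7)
The standard strategy for weak unique continuation is to show that
\[
Z := \{x \in \Omega : u = 0 \text{ a.e.\ in some neighborhood of } x\}
\]
is both open and relatively closed in $\Omega$, so that by connectedness $Z = \Omega$. Openness is immediate from the definition, and the hypothesis that $u$ vanishes on an open subset of $\Omega$ gives $Z \ne \emptyset$; the entire task is to prove closedness. I would argue by contradiction: if there exists $x_0 \in \overline Z \cap \Omega$ with $x_0 \notin Z$, fix a compact neighborhood $K \subset \Omega$ of $x_0$ and use the defining property of $\mathcal F^d_{\beta,\loc}$ to choose $\rho > 0$ so small that
\[
\|\mathbf{1}_{B_K(x_0,\rho)}|V|^{\frac{d-1}{4}}(-\Delta)^{-\frac{d-1}{2}}|V|^{\frac{d-1}{4}}\mathbf{1}_{B_K(x_0,\rho)}\|_{2 \mapsto 2} \le \beta'
\]
for some fixed $\beta < \beta' < 1$, and moreover so that $B(x_0,\rho)$ captures a positive-measure portion of $Z$.

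Next I would localize $u$ by a cutoff $\chi \in C_0^\infty$ with $\chi \equiv 1$ on $B(x_0,\rho/2)$ and $\supp \chi \subset B(x_0,\rho)$, and decompose
\[
\Delta(\chi u) = \chi\,\Delta u + 2 \nabla\chi \cdot \nabla u + u\,\Delta\chi.
\]
The first term is pointwise bounded by $|V||\chi u|$ via \eqref{diffineq}; the remaining ``commutator'' terms are supported in the annulus $A := B(x_0,\rho) \setminus B(x_0,\rho/2)$. Writing $\chi u = -(-\Delta)^{-1} \Delta(\chi u)$ and multiplying by $|V|^{\frac{1}{2}}$, I would invoke Proposition~\ref{ourlem}, which is precisely the quantitative bridge from the hypothesis on $|V|^{\frac{d-1}{4}}(-\Delta)^{-\frac{d-1}{2}}|V|^{\frac{d-1}{4}}$ to the $L^2 \to L^2$ norm of $|V|^{\frac{1}{2}}(-\Delta)^{-1}|V|^{\frac{1}{2}}$ on this localized scale, to obtain an inequality of the shape
\[
\||V|^{\frac{1}{2}}(\chi u)\|_{L^2} \le \beta'\,\||V|^{\frac{1}{2}}(\chi u)\|_{L^2} + C_A,
\]
with $C_A$ depending only on $u$ and $\nabla u$ restricted to $A$. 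Shrinking $\rho$ (using that $x_0 \in \overline Z$, so the part of $A$ outside $Z$ becomes negligible) drives $C_A$ to zero, and since $\beta' < 1$ the first term on the right absorbs into the left. Hence $|V|^{\frac{1}{2}} u = 0$ on $B(x_0,\rho/2)$; then \eqref{diffineq} forces $\Delta u = 0$ there, so $u$ is harmonic on $B(x_0,\rho/2)$. Since $u$ vanishes on the nonempty open set $Z \cap B(x_0,\rho/2)$, real-analyticity of harmonic functions gives $u \equiv 0$ on $B(x_0,\rho/2)$, contradicting $x_0 \notin Z$.

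The main obstacle is Proposition~\ref{ourlem}. When $d = 3$ one has $(d-1)/2 = 1$, so $\mathcal F^d_{\beta,\loc}$ is already formulated in terms of $|V|^{\frac{1}{2}}(-\Delta)^{-1}|V|^{\frac{1}{2}}$ and the estimate follows directly from the pointwise input of Lemma~\ref{sawlem1}. For $d \ge 4$ the hypothesis is phrased via the higher-order Riesz operator $(-\Delta)^{-(d-1)/2}$, and to recover an $L^2 \to L^2$ bound on $|V|^{\frac{1}{2}}(-\Delta)^{-1}|V|^{\frac{1}{2}}$ one must interpolate between the two operator families. Following the authors' indication, I would embed them in an analytic family $T_z$, use Lemmas~\ref{sawlem2} and~\ref{JKlem} to control the endpoints, and apply Stein's analytic interpolation theorem. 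The delicate point is keeping careful track of the interpolation constants so that the resulting $\beta'$ remains strictly less than $1$; this is also what forces the restriction to the strict subclass $\mathcal F^d_{\beta,\loc} \subsetneq F_{\beta,\loc}$ in dimensions $d \ge 4$.
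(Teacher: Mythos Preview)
Your argument has a genuine gap in the treatment of the commutator term $C_A$, and the gap reflects the absence of the central mechanism of the paper's proof: the Carleman weight. From
\[
\||V|^{1/2}\chi u\|_2 \le \beta'\,\||V|^{1/2}\chi u\|_2 + C_A
\]
you only obtain $\||V|^{1/2}\chi u\|_2 \le C_A/(1-\beta')$, so everything rests on forcing $C_A\to 0$. Your justification --- ``shrinking $\rho$, using that $x_0\in\overline Z$, so the part of $A$ outside $Z$ becomes negligible'' --- does not hold: the annulus $A=B(x_0,\rho)\setminus B(x_0,\rho/2)$ shrinks \emph{toward} $x_0$, and by hypothesis $x_0\notin Z$, so for small $\rho$ there is no reason any of $A$ lies in $Z$. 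Worse, $|\nabla\chi|\sim\rho^{-1}$ and $|\Delta\chi|\sim\rho^{-2}$, so the commutator is not even bounded as $\rho\to 0$. No smallness of $C_A$ is available.

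The paper supplies the missing structure by centering at a point where $u$ already vanishes on a small ball $B(0,a)$, passing to the \emph{Taylor-subtracted} identity $u_\eta=[(-\Delta)^{-1}]_N(-\Delta u_\eta)$, and conjugating by the weight $\varphi_{N_d^\delta}(x)=|x|^{-N_d^\delta}$. Proposition~\ref{ourlem} is a bound, uniform in $N$, on the \emph{weighted} operator $|V|^{1/2}[(-\Delta)^{-1}]_{N,N_d^\delta}|V|^{1/2}$ on $B(\rho\setminus a)$; it is not a statement about the bare operator $|V|^{1/2}(-\Delta)^{-1}|V|^{1/2}$ as you read it. With the weight in place, the cutoff contributions live on an \emph{outer} annulus $\{\rho\le|x|\le 3\rho\}$, where $\varphi_{N_d^\delta}\le\varphi_{N_d^\delta}(\rho)$, whereas on $B(0,\rho)\setminus B(0,a)$ the weight is at least $\varphi_{N_d^\delta}(\rho)$; dividing through and sending $N\to\infty$ forces $u\equiv 0$ on $B(0,\rho)$. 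It is the parameter $N$, not $\rho$, that kills the annular term, and the Taylor subtraction is precisely what makes the weighted operator bounded uniformly in $N$ (Lemmas~\ref{sawlem1}, \ref{JKlem}, \ref{sawlem2}). Your sketch of the Stein interpolation behind Proposition~\ref{ourlem} for $d\ge 4$ is in the right spirit, but the proposition concerns this weighted, subtracted operator, and without that ingredient the absorption argument cannot close.
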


%
%

\begin{theorem}
\label{mainthm2}
There exists a sufficiently small constant $\beta<1$ such that if $V \in \mathcal F^d_{\beta,\loc}$, then (\ref{diffineq})  has the SUC property in $Y_V^{\strong}$.
\end{theorem}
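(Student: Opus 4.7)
The plan is to deduce Theorem \ref{mainthm2} from Theorem \ref{mainthm}. Since $Y_V^{\strong}\subset Y_V^{\weak}$ and $\Omega$ is connected, it suffices to show that any $u\in Y_V^{\strong}$ which satisfies (\ref{diffineq}) and vanishes to infinite order at some $x_0\in\Omega$ actually vanishes on an open neighbourhood of $x_0$; the WUC property given by Theorem \ref{mainthm} then propagates the vanishing to all of $\Omega$. After a translation I assume $x_0=0$ and fix $R_0>0$ with $\overline{B(R_0)}\subset\Omega$.

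The key technical step is to upgrade Proposition \ref{ourlem} to a version carrying the radial weight $w_\tau(x):=|x|^{-\tau}$ for $\tau$ large. Concretely, I would prove a localized $L^2$--bound of the shape
\[
\bigl\|\,w_\tau\,|V|^{(d-1)/4} f \bigr\|_{2} \;\leq\; C\,\beta^{1/2}\,\bigl\|\,w_\tau\,(-\Delta)^{(d-1)/4} f\bigr\|_{2}
\]
for all $f$ supported in a sufficiently small ball $B(r)$, with $C$ independent of $\tau$ provided $\tau$ stays at distance at least $1/2$ from the set of spherical-harmonic eigenvalues $\{k(k+d-2):k\in\mathbb{N}\}$. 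For $d=3$ this is obtained by conjugating Lemma \ref{sawlem1} by $w_\tau$ and using the spherical-harmonic decomposition of $w_\tau(-\Delta)^{-1}w_\tau^{-1}$ to absorb the weight. For $d\geq 4$ one reruns the Stein analytic interpolation of \cite{SW} on the $\tau$-conjugated family built from Lemma \ref{sawlem2} and Lemma \ref{JKlem}, tracking uniformity in $\tau$ along the two critical lines of the strip.

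With this weighted estimate in hand, apply it (after routine regularization) to $f=\eta u$ where $\eta\in C_0^\infty(B(R_0))$ equals one on $B(R_0/2)$; the $H^{1,\bar p}_{\loc}$ regularity built into $Y_V^{\strong}$ is precisely what legitimizes the formation of $\nabla\eta\cdot\nabla u$ and keeps $\eta u$ in the domain of the relevant fractional operator. The right-hand side splits, via $\Delta(\eta u)=\eta\Delta u+R$ with $R:=(\Delta\eta)u+2\nabla\eta\cdot\nabla u$ supported in the annulus $A:=B(R_0)\setminus B(R_0/2)$, into a main part bounded by $C\beta^{1/2}\|w_\tau|V|^{1/2}\eta u\|_{2}$ through (\ref{diffineq}) and absorbed into the left-hand side for $\beta$ sufficiently small, and a commutator part supported on $A$. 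Restricting the left-hand side to $B(r)$ with $r<R_0/4$, and using $w_\tau\geq r^{-\tau}$ on $B(r)$ while $w_\tau\leq(R_0/2)^{-\tau}$ on $A$, yields after dividing through
\[
\int_{B(r)}|V||u|^2\,dx\;\leq\;C\,(2r/R_0)^{2\tau}\,N(u),
\]
where $N(u)$ is a finite, $\tau$-independent norm of $R$ on $A$. Sending $\tau\to\infty$ forces $|V|^{1/2}u=0$ on $B(r)$; then (\ref{diffineq}) gives $\Delta u=0$ on $B(r)$, and harmonicity combined with the infinite-order vanishing at $0$ (via the mean value property) gives $u\equiv 0$ on $B(r)$, completing the reduction.

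The bottleneck is the weighted estimate of the second step: the constant must remain proportional to $\beta^{1/2}$ and bounded uniformly in $\tau\to\infty$. In $d=3$ the explicit spherical-harmonic reduction handles both requirements at once, along the lines of the Jerison--Kenig treatment of the $L^p$ Carleman inequality. In $d\geq 4$ the real difficulty is that the Stein interpolation must be re-executed for the $w_\tau$-conjugated analytic family; one has to verify that both endpoint operator bounds survive the conjugation with norms independent of $\tau$, and that the $\mathcal F^d_{\beta,\loc}$ hypothesis, which only controls the unweighted operator on balls of vanishing radius, is strong enough to yield the weighted inequality on a ball of fixed (small) radius.
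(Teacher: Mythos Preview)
Your overall strategy---establish a weighted $L^2$ Carleman-type bound uniform in the weight exponent, absorb the main term on a small ball, control the commutator on an outer annulus, and send the exponent to infinity---is exactly the paper's. But two of the steps, as written, do not go through.

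First, the weighted inequality you display has powers that do not match the application you make of it. You bound $w_\tau|V|^{(d-1)/4}f$ by $w_\tau(-\Delta)^{(d-1)/4}f$, yet in the next paragraph you take $f=\eta u$ and decompose via $\Delta(\eta u)=\eta\Delta u+R$. That Leibniz identity concerns $(-\Delta)^1$, not $(-\Delta)^{(d-1)/4}$; the fractional power has no local commutator with $\eta$, so $R$ cannot be localized to the annulus this way. In fact no upgrade of Proposition \ref{ourlem} is needed: the subscript $N_d^\delta$ in $[(-\Delta)^{-1}]_{N,N_d^\delta}$ already \emph{is} conjugation by $|x|^{-N_d^\delta}$, and the Taylor-subtracted kernel $[(-\Delta)^{-1}]_N$ is precisely what keeps that conjugation bounded uniformly in $N$. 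The Carleman estimate you want is therefore Proposition \ref{ourlem} itself, applied to the identity $u_\eta=[(-\Delta)^{-1}]_N(-\Delta u_\eta)$ with the full Laplacian, not a fractional power. (Your spectral-gap condition ``$\tau$ at distance $\ge 1/2$ from $\{k(k+d-2)\}$'' is also not the right one; the relevant restriction is the $0<\delta<1/2$ in Lemma \ref{JKlem}.)

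Second, and more seriously, ``after routine regularization'' is exactly where the infinite-order-vanishing hypothesis must be used quantitatively, and it is not routine. With the singular weight $|x|^{-N_d^\delta}$ one cannot apply the estimate directly to $\eta u$; one must cut off near the origin by $\Psi_j$ (vanishing on $B(1/j)$), apply Proposition \ref{ourlem} on the resulting annulus, and then show that the new inner-boundary terms supported on $B(2/j)\setminus B(1/j)$, namely $(\Delta\Psi_j)u$ and $\nabla\Psi_j\cdot\nabla u$, go to $0$ as $j\to\infty$ despite the factor $j^{N_d^\delta}$ coming from the weight. The first term is killed by the infinite-order vanishing of $u$. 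The gradient term is the reason $Y_V^{\strong}$ carries the extra $H^{1,\bar p}_{\loc}$ hypothesis: one needs an interpolation inequality bounding $\|\mathbf 1_{B(2/j)}\nabla u\|_{\bar p}$ by $\|\mathbf 1_{B(4/j)}u\|_2$ and $\|\mathbf 1_{B(4/j)}\Delta u\|_r$ with $r=2d/(d+4)$, after which (\ref{diffineq}) and H\"older close the loop. This is the heart of the SUC argument, not a regularization detail. Your closing device---deduce only $Vu=0$ on $B(r)$, then invoke harmonicity plus infinite-order vanishing to get $u\equiv 0$---is a valid alternative endpoint once the above is in place; the paper avoids it by replacing $|V|$ with $V_1=|V|+1\ge 1$, so that the Carleman inequality controls $u$ itself.
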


The proofs of Theorems \ref{mainthm} and \ref{mainthm2} are given in Section \ref{mainsect}. 
Concerning the eigenvalue problem, we have the following result.

\begin{theorem}
\label{eigthm}
Suppose that $H$ is defined by (\ref{formsum}) in assumption that (\ref{formineq}) holds. Let us also assume that $V \in F^d_{\beta,\loc}$ for $\beta<1$ sufficiently small, and $\supp(V)$ is compact in $\mathbb R^d$. Then the only solution to the eigenvalue problem
\begin{equation}
\label{eigprob}
Hu=\lambda u, \quad u \in D(H), \quad \lambda>0
\end{equation}
is zero. 
\end{theorem}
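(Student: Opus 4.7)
The plan is to follow Kato's classical strategy: show that an eigenfunction $u$ of $H$ corresponding to a positive eigenvalue $\lambda$ must vanish outside some ball, and then invoke Theorem \ref{mainthm} on the connected domain $\mathbb R^d$ to conclude $u \equiv 0$. To fit the hypothesis of WUC, I shift: set $\tilde V := V - \lambda$, so the form equation $Hu = \lambda u$ gives $\Delta u = \tilde V u$ in $\mathcal D'(\mathbb R^d)$, which is the pointwise identity $|\Delta u| = |\tilde V u|$ a.e.~on $\mathbb R^d$. The inclusion $D(H) \subset Y_V^{\weak}$ recorded after (\ref{formineq}) gives $|V|^{\frac{1}{2}} u \in X_2$, and since $u \in L^2(\mathbb R^d)$ the constant shift preserves this, so $|\tilde V|^{\frac{1}{2}} u \in X_2$ and $u \in Y_{\tilde V}^{\weak}$.

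Next I would verify that $\tilde V$ still lies in $\mathcal F^d_{\beta',\loc}$ for some $\beta'<1$ small enough to apply Theorem \ref{mainthm}. For any compact $K \subset \mathbb R^d$, $x_0 \in K$ and small $\rho>0$, the pointwise inequality $|\tilde V|^{\frac{d-1}{4}} \leq C\,(|V|^{\frac{d-1}{4}} + \lambda^{\frac{d-1}{4}})$, combined with the positivity of the Riesz kernel $|x-y|^{1-d}$, allows one to dominate the defining operator norm by the analogous norm for $V$ plus cross and pure-constant error terms. The dominant error is of order
\begin{equation*}
\lambda^{\frac{d-1}{2}} \left\|\mathbf{1}_{B(x_0,\rho)}(-\Delta)^{-\frac{d-1}{2}}\mathbf{1}_{B(x_0,\rho)}\right\|_{2 \mapsto 2};
\end{equation*}
a Schur-test estimate using $\int_{B(x_0,2\rho)}|x-y|^{1-d}dy = O(\rho^{d-1})$ shows this tends to $0$ as $\rho \to 0$. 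The cross terms are handled analogously by Cauchy--Schwarz. Consequently $\tilde V \in \mathcal F^d_{\beta',\loc}$ with $\beta'$ arbitrarily close to $\beta$.

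Now let $R>0$ with $\supp V \subset B(R)$. Outside $\overline{B(R)}$, $u\in L^2(\mathbb R^d)$ solves the Helmholtz equation $-\Delta u = \lambda u$ in the distributional sense; by elliptic regularity this solution is smooth there. The Rellich uniqueness theorem for the Helmholtz equation with $\lambda>0$ (an $L^2$ solution of $(-\Delta-\lambda)u=0$ on an exterior domain of $\mathbb R^d$ vanishes identically on that domain) yields $u \equiv 0$ on $\mathbb R^d\setminus\overline{B(R)}$. Since this is a nonempty open subset of the connected set $\mathbb R^d$, Theorem \ref{mainthm} applied to $u$ and $\tilde V$ forces $u\equiv 0$ on $\mathbb R^d$.

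The main technical obstacle is the verification in the second paragraph: a constant potential is not globally in $\mathcal F^d_{\beta,\loc}$ for any $\beta<1$, so one must genuinely exploit the local-at-zero-scale nature of the class (the $\rho \to 0$ limit) to absorb the additive shift by $-\lambda$ without worsening the smallness constant. Once this is done, invoking Rellich and Theorem \ref{mainthm} is straightforward.
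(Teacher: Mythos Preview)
Your proposal is correct and follows essentially the same route as the paper: verify $D(H)\subset Y_{V-\lambda}^{\weak}$, show $u$ vanishes outside a ball, and apply Theorem~\ref{mainthm}. The only cosmetic differences are that the paper cites Kato's theorem \cite{Kato1} directly for the compact-support step (whereas you invoke the underlying Rellich uniqueness for the exterior Helmholtz problem), and the paper absorbs the constant shift $-\lambda$ implicitly via the observation (\ref{onerel}) rather than spelling out the Schur-test argument you give; incidentally, that same observation shows a constant potential \emph{does} lie in $\bigcap_{\beta>0}\mathcal F^d_{\beta,\loc}$, so your phrasing of the ``main technical obstacle'' slightly overstates the difficulty.
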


\begin{proof}
The following inclusions are immediate from the definition of operator $H$: $$D(H) \subset H^{1,2}(\mathbb R^d) \cap D(V_+^{\frac{1}{2}}) \cap D(V_-^{\frac{1}{2}}),$$ $$D(H) \subset D(H_{\max}),$$
where $$D(H_{\max}):=\{f \in \mathcal H: ~ \Delta f \in \mathcal D'(\mathbb R^d) \cap L^1_{\loc}(\mathbb R^d), Vf \in L^1_{\loc}(\mathbb R^d), -\Delta f+Vf \in \mathcal H\}.$$
Therefore, $D(H) \subset Y_V^{\weak}$ and if $u \in D(H)$ is a solution to (\ref{eigprob}), then $$|\Delta u|=|(V-\lambda) u| \quad \text{ a.e. in }\mathbb R^d.$$ 
By Kato's theorem \cite{Kato1} $u$ has compact support.
Now Theorem \ref{eigthm} follows from Theorem \ref{mainthm}.
\end{proof}

\section{Historical context}

\label{compsect}


\vspace*{2mm}

1) D.~Jerison and C.~Keing \cite{JK} and E.M.~Stein \cite{Ste} proved the validity of the SUC property for potentials from classes $L^{\frac{d}{2}}_{\loc}(\Omega)$ and $L^{\frac{d}{2},\infty}_{\loc}(\Omega)$ (weak type $d/2$ Lorentz space), respectively. Below $\|\cdot\|_{p,\infty}$ denotes weak type $p$ Lorentz norm.
One has
\begin{equation}
\label{incll}
L^{\frac{d}{2}}_{\loc}(\Omega) \subsetneq \bigcap_{\beta>0}\mathcal F^d_{\beta,\loc},
\end{equation}
\begin{equation}
\label{incllweak}
L^{\frac{d}{2},\infty}_{\loc}(\Omega) \subsetneq \bigcup_{\beta>0}\mathcal F^d_{\beta,\loc}.
\end{equation}
The first inclusion follows straightforwardly from the Sobolev embedding theorem. For the following proof of the second inclusion  let us note first that
\begin{equation*}
\|\mathbf{1}_{B(x_0,\rho)} |W|^{\frac{d-1}{4}}(-\Delta)^{-\frac{d-1}{2}}|W|^{\frac{d-1}{4}}\mathbf{1}_{B(x_0,\rho)}\|_{2 \mapsto 2}= \|\mathbf{1}_{B(x_0,\rho)}|V|^{\frac{d-1}{4}}(-\Delta)^{-\frac{d-1}{4}}\|_{2 \mapsto 2}^2.
\end{equation*}
Next, if $V \in L^{d/2,\infty}$, then
\begin{equation}
\label{strichineq}
\|\mathbf{1}_{B(x_0,\rho)}|V|^{\frac{d-1}{4}}(-\Delta)^{-\frac{d-1}{4}}\|_{2 \mapsto 2} \leq \left(\frac{2d^{-1}\pi^{\frac{d}{2}}c_{\frac{1}{2}}}{\Gamma\left(\frac{d}{2} \right)c_{\frac{d}{2}}}\right) \|\mathbf{1}_{B(x_0,\rho)} V\|_{\frac{d}{2},\infty}^{\frac{d-1}{4}},
\end{equation}
which is a special case of Strichartz inequality with sharp constants, proved in \cite{KPS}. Required inclusion follows.

To see that the latter inclusion is strict we introduce a family of potentials
\begin{equation}
\label{steinV}
V(x):=\frac{C\bigl(\mathbf{1}_{B(1+\delta)}(x)-\mathbf{1}_{B(1-\delta)}(x)\bigr)}{\bigl(|x|-1\bigr)^{\frac{2}{d-1}}\left(-\ln \bigl||x|-1\bigr| \right)^b}, \quad \text{ where }b>\frac{2}{d-1}, \quad 0<\delta<1.
\end{equation} 
A straightforward computation shows that 
$V \in \mathcal F^d_{\beta,\loc}$, as well as
$V \in L_{\loc}^{\frac{d-1}{2}}(\Omega) \setminus L_{\loc}^{\frac{d-1}{2}+\varepsilon}(\Omega)$ for any $\varepsilon>0$, so that $V \not \in L_{\loc}^{\frac{d}{2},\infty}(\Omega)$.

\vspace*{2mm}

The result in \cite{Ste} can be formulated as follows.
Suppose that $d \geq 3$ and $V \in L^{\frac{d}{2},\infty}_{\loc}(\Omega)$. There exists a sufficiently small constant $\beta$ such that if 
\begin{equation*}
\sup_{x_0 \in \Omega}\overline{\lim\limits_{\rho \to 0}}\|\mathbf{1}_{B_K(x_0,\rho)}V\|_{\frac{d}{2},\infty} \leq \beta, 
\end{equation*}
then (\ref{diffineq}) has the SUC property in 
 $Y_V:=H^{2,\bar{p}}_{\loc}(\Omega)$, where $\bar{p}:=\frac{2d}{d+2}$. (It is known that the assumption of $\beta$ being sufficiently small can not be omitted, see
 \cite{KT}.) 
 
In view of (\ref{incll}), (\ref{incllweak}), the results in \cite{Ste} and in \cite{JK} follow from Theorem \ref{mainthm2} provided that we show $|V|^{\frac{1}{2}}u \in X_2$. Indeed, let $L^{q,p}$ be the $(q,p)$ Lorentz space (see \cite{SW}). By Sobolev embedding theorem for Lorentz spaces $H_{\loc}^{2,\bar{p}}(\Omega) \hookrightarrow L_{\loc}^{\bar{q},\bar{p}}(\Omega)$ with $\bar{q}:=\frac{2d}{d-2}$ \cite{SW}. Hence, by H\"{o}lder inequality in Lorentz spaces $|V|^{\frac{1}{2}}u \in X_2$ whenever $u \in L_{\loc}^{\bar{q},\bar{p}}(\Omega)$ and $V \in L_{\loc}^{d/2,\infty}$. Also, $H_{\loc}^{2,\bar{p}}(\Omega) \hookrightarrow H_{\loc}^{1,\bar{p}}(\Omega)$, so  $H_{\loc}^{2,\bar{p}}(\Omega) \subset Y^{\strong}_V,$
as required.

\vspace*{2mm}

2) E.T.~Sawyer \cite{Saw} proved uniqueness of continuation for the case $d=3$ and potential $V$ from the local Kato-class
\begin{equation*}
\mathcal K_{\beta,\loc}:=\{W \in L^{1}_{\loc}(\Omega): \sup_K\overline{\lim\limits_{\rho \to 0}} \sup_{x_0 \in K} \|(-\Delta)^{-1} \mathbf{1}_{B_K(x_0,\rho)} |W|\|_{\infty} \leq \beta\},
\end{equation*}
where $K$ is a compact subset of $\Omega$.
It is easy to see that
\begin{equation*}
\mathcal K_{\beta,\loc} \subsetneq F_{\beta,\loc}.
\end{equation*}
To see that the latter inclusion is strict consider, for instance, 
potential
\begin{equation*}
V_{\beta}(x):=\beta v_0, \quad v_0:=\left(\frac{d-2}{2}\right)^2|x|^{-2}.
\end{equation*}
By Hardy's inequality, $V_\beta \in F_{\beta,\loc}$. At the same time, $\|(-\Delta)^{-1} v_0\mathbf{1}_{B(\rho)}\|_{\infty}=\infty$ for all $\rho>0$, hence $V_{\beta} \not \in \mathcal K_{\beta,\loc}$ for all $\beta \ne 0$.

The next statement is essentially due to E.T.~Sawyer \cite{Saw}. 

\begin{theorem}
\label{sawthm}
Let $d=3$. There exists a constant $\beta<1$ such that if $V \in \mathcal K_{\beta,\loc}$ then (\ref{diffineq}) has the WUC property in $Y_V^{\mathcal K}:=\{f \in X_1: \Delta f \in X_1,~Vf \in X_1\}$.
\end{theorem}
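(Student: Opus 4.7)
The plan is to argue in the spirit of Sawyer \cite{Saw}, exploiting that in dimension three the Newton kernel is explicit, $[(-\Delta)^{-1}](x,y) = \frac{1}{4\pi|x-y|}$, which is precisely what makes the Kato class $\mathcal K_{\beta,\loc}$ natural in this setting. Suppose $u \in Y_V^{\mathcal K}$ satisfies $|\Delta u| \leq |Vu|$ a.e.\ and vanishes on a nonempty open subset of $\Omega$. Let $\mathcal O \subset \Omega$ denote the largest open set on which $u = 0$ a.e., and proceed by contradiction, aiming to show $\mathcal O = \Omega$: by connectedness, failure yields a point $x_0 \in \partial \mathcal O \cap \Omega$ at which one must produce a contradiction by showing $u$ vanishes a.e.\ on a neighbourhood of $x_0$.

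The central estimate is a localized pointwise bound. Fix $\rho > 0$ small so that $\overline{B(x_0,2\rho)} \subset \Omega$ and
\begin{equation*}
\bigl\|(-\Delta)^{-1}\bigl(\mathbf{1}_{B(x_0,2\rho)}|V|\bigr)\bigr\|_\infty \leq \beta < 1,
\end{equation*}
which is possible by the hypothesis $V \in \mathcal K_{\beta,\loc}$. Take $\phi \in C_c^\infty(B(x_0,2\rho))$ with $\phi \equiv 1$ on $B(x_0,\rho)$ and set $w := \phi u$; then $w$ has compact support and $\Delta w \in L^1(\mathbb R^3)$, so the Newtonian representation
\begin{equation*}
w(x) = -\frac{1}{4\pi}\int_{\mathbb R^3}\frac{\Delta w(y)}{|x-y|}\,dy
\end{equation*}
applies. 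Expanding $\Delta w = \phi \Delta u + 2\nabla\phi\cdot\nabla u + u\Delta\phi$ and integrating by parts against $\phi$ to transfer the single derivative off $u$, one obtains for every $x \in B(x_0,\rho)$
\begin{equation*}
|u(x)| \leq \frac{1}{4\pi}\int_{B(x_0,2\rho)}\frac{|V(y)u(y)|}{|x-y|}\,dy + R(x),
\end{equation*}
where $R(x)$ depends only on $u$ restricted to the annulus $A := B(x_0,2\rho) \setminus B(x_0,\rho)$ through kernels uniformly bounded on $B(x_0,\rho) \times A$. The Kato hypothesis bounds the first integral by $\beta\,\|u\|_{L^\infty(B(x_0,2\rho))}$.

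Taking the supremum in $x$ yields $\|u\|_{L^\infty(B(x_0,\rho))} \leq \beta\,\|u\|_{L^\infty(B(x_0,2\rho))} + C\|u\|_{L^1(A)}$, and the factor $\beta < 1$ combined with a careful iteration on a nested sequence of balls shrinking to $x_0$---using that $\mathcal O$ accumulates at $x_0$, so that growing portions of each annulus lie in $\mathcal O$ and contribute nothing to $\|u\|_{L^1(A)}$---forces $u$ to vanish on a neighbourhood of $x_0$, contradicting $x_0 \in \partial \mathcal O$. The main obstacle I anticipate is that the class $Y_V^{\mathcal K}$ only grants $u, Vu, \Delta u \in L^1_{\loc}$, so a priori neither is $u$ known to lie in $L^\infty_{\loc}$ (needed to make sense of the supremum estimate) nor is $\nabla u$ available for the integration by parts. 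One resolves the first point by Kato-class regularity theory (Newtonian convolution against a Kato-class potential is $L^\infty$-bounded, which is precisely the content of the defining estimate of $\mathcal K_{\beta,\loc}$), and the second by transferring all derivatives onto the smooth cutoff $\phi$. These regularity bootstraps, together with the exact form of the iteration, are the delicate technical points for which one invokes Sawyer's original argument.
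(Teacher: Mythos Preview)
Your outline has a genuine gap: the contraction inequality you derive does not, by itself, yield unique continuation, and the missing ingredient is precisely the Carleman--type weight that the paper (following \cite{Saw}) introduces.

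Concretely, your estimate has the form
\[
\|u\|_{L^\infty(B(x_0,\rho))}\;\le\;\beta\,\|u\|_{L^\infty(B(x_0,2\rho))}+C\|u\|_{L^1(A)},\qquad A=B(x_0,2\rho)\setminus B(x_0,\rho),
\]
which controls $u$ on a \emph{smaller} ball by $u$ on a \emph{larger} one. There is no mechanism here to propagate the zero set outward. Your proposed fix --- iterate on balls shrinking to $x_0$ and use that ``growing portions of each annulus lie in $\mathcal O$'' --- does not work: for $x_0\in\partial\mathcal O$ the fraction of each annulus lying in $\mathcal O$ need not tend to $1$ (think of $\mathcal O$ a half-space), and in any case the direction of the inequality is wrong for enlarging the zero set. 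This is exactly why unique continuation arguments require a \emph{free parameter} (a Carleman weight exponent) that one sends to infinity, not merely a single contraction with $\beta<1$. A secondary but real issue is that your $L^\infty$ bootstrap is circular: bounding $\frac{1}{4\pi}\int\frac{|Vu|}{|x-y|}$ by $\beta\|u\|_{L^\infty}$ already presupposes $u\in L^\infty_{\loc}$, which is not part of $Y_V^{\mathcal K}$.

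The paper's proof avoids both problems. It works entirely in $L^1$ (matching $Y_V^{\mathcal K}$), replaces the Newton kernel by its Taylor-remainder $[(-\Delta)^{-1}]_N$ of order $N$, and conjugates by the weight $\varphi_N(x)=|x|^{-N}$. Lemma~\ref{sawlem1} (which in $d=3$ has no $N$-dependent constant) gives
\[
\bigl|[(-\Delta)^{-1}]_N(x,y)\bigr|\;\le\;C\Bigl(\tfrac{|x|}{|y|}\Bigr)^{N}(-\Delta)^{-1}(x,y),
\]
so the weighted, truncated kernel is dominated by the original one, and the Kato hypothesis yields $\|K_1\|_1\le C\beta\|K\|_1$ for the ``inner'' piece. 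The ``outer'' piece picks up a factor $\rho^{-N}$; after absorbing $K_1$ one obtains
\[
(1-C\beta)\,\bigl\|\mathbf 1_{B(\rho)}(\rho/|x|)^{N}u\bigr\|_1\;\le\;\hat C\,\|\Delta u_\eta\|_1,
\]
uniformly in $N$. Sending $N\to\infty$ forces $u\equiv 0$ on $B(\rho)$. The parameter $N$ is the essential feature your sketch is missing.
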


The proof of Theorem \ref{sawthm} is provided in Section \ref{exsect}.

Despite the embedding $\mathcal K_{\beta,\loc} \hookrightarrow F_{\beta,\loc}$, Theorem \ref{mainthm} does not imply Theorem \ref{sawthm}. The reason is simple:
$Y_V^{\mathcal K} \not\subset Y_V^{\weak}$.

\vspace*{2mm}

3) S.~Chanillo and E.T.~Sawyer showed in \cite{SawChan} the validity of the SUC property for (\ref{diffineq}) in  $Y_V=H^{2,2}_{\loc}(\Omega)$ ($d \geq 3$) for potentials $V$ locally small in Campanato-Morrey class $M^p$ ($p>\frac{d-1}{2}$), 
\begin{equation*}
M^p:=\{W \in L^p: \|W\|_{M^p}:=\sup_{x \in \Omega,~r>0}r^{2-\frac{d}{p}}\|\mathbf{1}_{B(x,r)} W\|_p<\infty\}.
\end{equation*}

Note that for $p>\frac{d-1}{2}$
\begin{equation*}
M^p_{\loc} \subsetneq \bigcup_{\beta>0} \mathcal F_{\beta,\loc}^d
\end{equation*}
(see \cite{SawChan,F,KS}). 
To see that the above inclusion is strict one may consider, for instance, potential defined in (\ref{steinV}).

It is easy to see, using H\"{o}lder inequality, that if $u \in H^{2,2}_{\loc}(\Omega)$ and $V \in M^p_{\loc}$ ($p>\frac{d-1}{2}$), then $|V|^{\frac{1}{2}}u \in X_2$, i.e., $u \in Y_V^{\weak}$. However, the assumption `$u \in H^{2,2}_{\loc}$' is in general too restrictive for application of this result to the problem of absence of positive eigenvalues (see Remark \ref{proprem}).

\begin{remark}
\label{proprem}
Below we make several comments about $H^{2,q}$-properties of the eigenfunctions of the self-adjoint Schr\"{o}dinger operator $H=(-\Delta \dotplus V_+)\dotplus (-V_-)$, $V=V_+-V_-$, defined by (\ref{formsum}) in the assumption that condition
\begin{equation}
\label{formineq3}
V_- \leq \beta (H_0 \dotplus V_+)+c_{\beta}, \quad \beta<1, c_\beta<\infty
\end{equation}
is satisfied. (Note that (\ref{formineq3}) implies condition (\ref{formineq}). We say that (\ref{formineq3}) is satisfied with $\beta=0$ if (\ref{formineq3}) holds for any $\beta>0$ arbitrarily close to 0, for an appropriate $c_{\beta}<\infty$.)

Let $u \in D(H)$ and $Hu=\mu u$. Then
\begin{equation*}
e^{-tH}u=e^{-t\mu}u, \quad t>0.
\end{equation*}
As is shown in \cite{LS},
for every $2 \leq q< \frac{2d}{d-2}\frac{1}{1-\sqrt{1-\beta}}$ there exists a constant $c=c(q,\beta)>0$ such that
\begin{equation}
\label{LSineq}
\|e^{-tH}f\|_q \leq ct^{-\frac{d}{2}\left(\frac{1}{2}-\frac{1}{q} \right)}\|f\|_2,
\end{equation}
where $f \in L^2=L^2(\mathbb R^d)$. Let us now consider several possible $L^p$ and $L^{p,\infty}$ (as well as $L^p_{\loc}$ and $L^{p,\infty}_{\loc}$) conditions on potential $V$. In each case, the corresponding result on $H^{2,q}$-properties of the eigenfunction $u$  immediately implies the inclusion $|V|^{\frac{1}{2}}u \in L^2$ (respectively, $|V|^{\frac{1}{2}}u \in X_2$) (cf. $D(H)$ and $Y_V^{\weak}$).



\vspace*{2mm}

(A) Suppose in addition to (\ref{formineq3}) that $V \in L^{\frac{d-1}{2}}_{\loc}$. Then by H\"{o}lder inequality and (\ref{LSineq}) $Vu \in L^q_{\loc}$ and, due to inclusion $D(H) \subset D(H_{\max})$,  $\Delta u \in L^q_{\loc}$ for any $q$ such that
\begin{equation*}
\frac{1}{q}>\frac{2}{d-1}+\frac{d-2}{d}\frac{1-\sqrt{1-\beta}}{2}.
\end{equation*}
The latter implies that $q<2$ in general, i.e., when $\beta$ in (\ref{formineq3}) is close to $1$. Hence, in general the assumption `$u \in H_{\loc}^{2,2}$' is too restrictive for applications to the problem of absence of positive eigenvalues even under additional hypothesis
of the type $V \in L^p_{\comp}$, $\frac{d-1}{2}<p<\frac{d}{2}$
 or $V \in M^p_{\comp}$, $\frac{d-1}{2}<p<\frac{d}{2}$ (cf. \cite{SawChan,RV}).

\vspace*{2mm}

(B1) If $V=V_1+V_2 \in L^p+L^\infty$, $p>\frac{d}{2}$, then (\ref{formineq3}) holds with $\beta=0$ and $u \in L^\infty$. Moreover, it follows that $u \in C^{0,\alpha}$ for any $\alpha \in (0,1-\frac{2}{d}]$. Therefore, $u \in H_{\loc}^{2,p}$ and, in particular, for $d \geq 4$, $u \in H^{2,2}$. 


\vspace*{2mm}

(B2) Assume in addition to (\ref{formineq3}) that $V \in L^p_{\loc}$, $p>\frac{d}{2}$, and $\beta=0$. Then $u \in H^{2,\underline{p}}_{\loc}$, $\underline{p}>\frac{d}{2}$. If $d=3$, and $p>\frac{d}{2}$ is close to $\frac{d}{2}$, then $u \not\in H^{2,2}_{\loc}$, but of course $u \in H^{2,\bar{p}}_{\loc}$, $\bar{p}=\frac{2d}{d+2}$~($<\underline{p}$). 


\vspace*{2mm}

(B3) If $V=V_1+V_2 \in L^{\frac{d}{2}}+L^\infty$, then (\ref{formineq3}) is satisfied with $\beta=0$ and $u \in \cap_{2 \leq r<\infty} L^r$. Therefore $u \in H^{2,q}_{\loc}$, $q<\frac{d}{2}$ (cf. Remark in \cite{ABG}). In particular, $u \in H^{2,\bar{p}}_{\loc}$ (cf. \cite{JK}). But for $d \geq 5$ it follows $u \in H^{2,2}_{\loc}$. 


\vspace*{2mm}

(B4) Finally, suppose that $V=V_1+V_2 \in L^{\frac{d}{2},\infty}+L^\infty$ is such that 
$$\beta:=\left(\frac{d^{-1}\pi^{\frac{d}{2}} \Gamma \left(\frac{d}{4}-\frac{1}{2} \right)}{\Gamma \left(\frac{d}{2}\right)\Gamma \left(\frac{d}{4}+\frac{1}{2} \right)}\right) \|V_1\|_{\frac{d}{2},\infty}<1.$$
Then we have
\begin{equation}
\label{formineq4}
|V| \leq \beta H_0+c_{\beta}, \quad c_\beta<\infty
\end{equation}
and, at the same time,
\begin{equation*}
\|V(\lambda+H_{0,\bar{p}})^{-1}\|_{\bar{p} \mapsto \bar{p}} \leq \beta, \quad \lambda \geq \frac{c_\beta}{\beta}
\end{equation*}
(see \cite{KPS}),
where $H_{0,\bar{p}}$ stands for the extension of $-\Delta$ in $L^{\bar{p}}$ with $D(H_{0,\bar{p}})=H^{2,\bar{p}}$. 
The first inequality implies condition (\ref{formineq3}) and, hence, allows us to conclude that the form sum $H:=H_0\dotplus V$ is well defined. In turn, the second inequality implies existence of the algebraic sum $\hat{H}_{\bar{p}}:=H_{0,\bar{p}}+V$ defined in $L^{\bar{p}}$ with $D(\hat{H}_{\bar{p}})=H^{2,\bar{p}}$, which
coincides with $H$ on the intersection of domains $D(H) \cap H^{2,\bar{p}}$. By making use of the representation
\begin{equation*}
(\lambda+\hat{H}_{\bar{p}})^{-1}=(\lambda+H_{0,\bar{p}})(1+V(\lambda+H_{0,\bar{p}})^{-1})^{-1}
\end{equation*}
one immediately obtains that $(\lambda+\hat{H}_{\bar{p}})^{-1}:L^{\bar{p}} \mapsto L^2$, i.e., any eigenfunction of operator $\hat{H}_{\bar{p}}$ belongs to $L^2$. Furthermore, an analogous representation for $(\lambda+H)^{-1}$ yields the identity
\begin{equation*}
(\lambda+H)^{-1}f=(\lambda+\hat{H}_{\bar{p}})^{-1}f, \quad f \in L^2 \cap L^{\bar{p}}.
\end{equation*}
Therefore, any eigenfunction of $\hat{H}_{\bar{p}}$ is an eigenfunction of $H$ (cf. \cite{Ste}). The converse statement is valid, e.g., for eigenfunctions having compact support.




If $V \in L^{\frac{d}{2},\infty}_{\loc}$ and (\ref{formineq4}) holds, then $u \in H^{2,q_0}_{\loc}$ for some $q_0>\bar{p}$. Indeed, we have $V \in L^r_{\loc}$ for any $r<\frac{d}{2}$, and so by (\ref{LSineq}) $u \in L^p$ for some $p>\frac{2d}{d-2}$. Thus, $Vu \in L^{q_0}_{\loc}$ for a certain $q_0>\bar{p}$ and, hence, $u \in H^{2,q_0}_{\loc}$. The latter confirms that the result in \cite{Ste}) applies to the problem of absence of positive eigenvalues.


\end{remark}

\section{Proofs of Theorems \ref{mainthm} and \ref{mainthm2}}

\label{mainsect}

Let us introduce some notations. 
In what follows, we omit index $K$ in $B_K(x_0,\rho)$, and write simply $B(x_0,\rho)$.

\vspace*{2mm}

Let $W \in X_{\frac{d-1}{2}}$, $x_0 \in \Omega$, $\rho>0$, $d \geq 3$, define
\begin{equation}
\label{taudef}
\tau(W,x_0,\rho):=\|\mathbf{1}_{B(x_0,\rho)} |W|^{\frac{d-1}{4}}(-\Delta)^{-\frac{d-1}{2}}|W|^{\frac{d-1}{4}}\mathbf{1}_{B(x_0,\rho)}\|_{2 \mapsto 2}.
\end{equation}

Let
$\mathbf{1}_{B(\rho \setminus a)}$ be the characteristic function of set $B(0,\rho) \setminus B(0,a)$, where $0<a<\rho$, and
\begin{equation*}
N_{d}^\delta:=N+\left(\frac{d}{2}-\delta\right)\frac{d-3}{d-1}.
\end{equation*}
We define integral operator
\begin{equation*}
\left[(-\Delta)^{-\frac{z}{2}}\right]_N f(x):=\int_{\mathbb R^d} \left[(-\Delta)^{-\frac{z}{2}}\right]_N(x,y) f(y)dy, \quad 0 \leq \Real(z) \leq d-1
\end{equation*}
whose kernel $\bigl[(-\Delta)^{-\frac{z}{2}}\bigr]_N(x,y)$ is defined by subtracting Taylor polynomial of degree $N-1$ at $x=0$ of function $x \mapsto |x-y|^{z-d}$,
\begin{equation*}
\left[(-\Delta)^{-\frac{z}{2}}\right]_N(x,y):=c_z \left( |x-y|^{z-d} -
\sum_{k=0}^{N-1} \frac{(x \cdot \nabla)^k}{k!}|0-y|^{z-d}
\right),
\end{equation*}
where $(x \cdot \nabla)^k:=\sum_{|\alpha|=k} \frac{k!}{\alpha_1!\dots\alpha_d!}x^\alpha \frac{\partial^k}{\partial x_1^{\alpha_1}\dots x_n^{\alpha_n}}$ is the multinomial expansion of $(x \cdot \nabla)$.
Define, further,
\begin{equation*}
\left[(-\Delta)^{-\frac{z}{2}}\right]_{N,t}:=\varphi_t\left[(-\Delta)^{-\frac{z}{2}}\right]_N\varphi_t^{-1},
\end{equation*}
where $\varphi_{t}(x):=|x|^{-t}$. 

Note that if $V$ is a potential from our class $\mathcal F^d_{\beta,\loc}$, and $V_1:=|V|+1$, then for a fixed $x_0 \in \Omega$
\begin{equation}
\label{onerel}
\tau(V_1,x_0,\rho) \leq \tau(V,x_0,\rho)+\varepsilon(\rho),
\end{equation}
where $\varepsilon(\rho) \to 0$ as $\rho \to 0$.


\subsection{Proof of Theorem \ref{mainthm}}Our proof is based on inequalities of Proposition \ref{ourlem} and Lemma \ref{sawlem1}.

\begin{proposition}
\label{ourlem}
If $\tau(V,0,\rho)<\infty$, then there exists a constant $C=C(\rho,\delta,d)>0$ such that 
\begin{equation*}
\|\mathbf{1}_{B(\rho \setminus a)}|V|^{\frac{1}{2}}\left[(-\Delta)^{-1}\right]_{N,N_{d}^\delta}|V|^{\frac{1}{2}}\mathbf{1}_{B(\rho \setminus a)}\|_{2 \mapsto 2} \leq C\tau(V,0,\rho)^{\frac{1}{d-1}},
\end{equation*}
where $0<\delta<1/2$, for all positive integers $N$.
\end{proposition}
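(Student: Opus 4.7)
The plan is to split the proof according to dimension. For $d = 3$ one has $N_d^\delta = N$ and $\tfrac{1}{d-1} = \tfrac{1}{2}$, so Sawyer's Lemma \ref{sawlem1} applied directly to the sandwich on the left-hand side yields the required bound $\leq C(\rho)\,\tau(V,0,\rho)^{1/2}$. For $d \geq 4$ I would apply Stein's interpolation theorem for analytic families of operators to
\begin{equation*}
T(z) := \mathbf{1}_{B(\rho \setminus a)}\,|V_1|^{z/4}\,\left[(-\Delta)^{-z/2}\right]_{N, M(z)}\,|V_1|^{z/4}\,\mathbf{1}_{B(\rho \setminus a)}, \qquad 0 \leq \Real(z) \leq d - 1,
\end{equation*}
where $V_1 := |V|+1$ (so that (\ref{onerel}) connects $\tau(V_1,0,\rho)$ to $\tau(V,0,\rho)$) and $M(z)$ is the affine function of $z$ determined by $M(2) = N_d^\delta$ and $M(d-1) = 0$. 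These conditions are chosen precisely so that $T(2)$ coincides with the operator in the statement of the proposition (with $V$ replaced by $V_1$), while $T(d-1)$ involves the un-conjugated Riesz potential $\left[(-\Delta)^{-(d-1)/2}\right]_N$.

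On $\Real(z) = d-1$, $\Real M((d-1)+it) = 0$, so $\varphi_{M((d-1)+it)}$ is a unitary multiplier and $T((d-1)+it)$ differs from the operator defining $\tau(V_1,0,\rho)$ only through this unitary phase and the Taylor correction; the latter is controlled pointwise by Lemma \ref{sawlem2} and, after an application of Lemma \ref{JKlem} of \cite{JK}, produces the bound $\|T((d-1)+it)\|_{2 \mapsto 2} \leq Ce^{c|t|}\,\tau(V,0,\rho)$. On $\Real(z) = 0$, $(-\Delta)^{-it/2}$ is a unitary Fourier multiplier and $|V_1|^{it/4}$ has modulus one, while $\varphi_{M(it)}$, a function of modulus $|x|^{-\Real M(0)}$ bounded on the compact annulus $B(\rho \setminus a)$, contributes only a harmless multiplicative factor; the Taylor correction is handled again through Lemmas \ref{sawlem2} and \ref{JKlem}, yielding $\|T(it)\|_{2 \mapsto 2} \leq Ce^{c|t|}$. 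Hadamard's three-lines lemma at $\Real(z) = 2$ then delivers the claimed bound with exponent $\tfrac{1}{d-1}$ in $\tau(V,0,\rho)$, the $\varepsilon(\rho)$-correction from (\ref{onerel}) being absorbed into the constant.

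The main obstacle I anticipate is the endpoint estimate on $\Real(z) = 0$, where the weighted Taylor-subtracted kernel $|x|^{-M(it)}\left[(-\Delta)^{-z/2}\right]_N(x,y)\,|y|^{M(it)}$ must be shown to define an $L^2 \mapsto L^2$ operator with at most exponential growth in $|\Imag(z)|$, as Stein's admissibility hypothesis requires. This forces a careful treatment of the two regions: on $|x| < |y|$ the Taylor remainder supplies the favourable decay of order $(|x|/|y|)^N$ which is then compensated by the weight ratio, while on $|x| > |y|$ the subtracted monomials of degree up to $N-1$ dominate and must be tamed by the factor $|x|^{-M(it)}|y|^{M(it)}$; Lemma \ref{sawlem2} and Lemma \ref{JKlem} are the decisive ingredients in this analysis.
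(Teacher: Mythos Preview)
Your interpolation scheme has a genuine flaw at the right endpoint. By choosing $M(d-1)=0$ you strip away the conjugation by $\varphi_N$ precisely where it is indispensable. Lemma~\ref{sawlem2} gives
\[
\bigl|[(-\Delta)^{-(d-1+i\gamma)/2}]_N(x,y)\bigr|\;\le\;C_1e^{c_1\gamma^2}\Bigl(\tfrac{|x|}{|y|}\Bigr)^{N}(-\Delta)^{-(d-1)/2}(x,y),
\]
and the factor $(|x|/|y|)^N$ is cancelled \emph{only} if one conjugates by $\varphi_N$, i.e.\ multiplies the kernel by $|x|^{-N}|y|^{N}$. With $M(d-1)=0$ your weight $\varphi_{M((d-1)+it)}$ is unimodular and removes nothing; on the annulus $a\le|x|,|y|\le\rho$ the best available bound is $(|x|/|y|)^N\le(\rho/a)^N$, which grows with $N$ and destroys the uniformity the proposition asserts. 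Invoking Lemma~\ref{JKlem} at this endpoint does not help either: that lemma concerns $[(-\Delta)^{-i\gamma}]_N$, not $[(-\Delta)^{-(d-1+i\gamma)/2}]_N$. The same defect propagates to the left endpoint: your $M(0)=N_d^\delta\,\tfrac{d-1}{d-3}=N\tfrac{d-1}{d-3}+(\tfrac d2-\delta)$ does not match the exponent $N+\tfrac d2-\delta$ required by Lemma~\ref{JKlem}, and the ``harmless multiplicative factor'' you invoke, namely $\sup_{B(\rho\setminus a)}|x|^{-M(0)}\cdot\sup_{B(\rho\setminus a)}|y|^{M(0)}=(\rho/a)^{M(0)}$, again depends on $N$.

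The paper's choice is simply $M(d-1)=N$ and $M(0)=N+\tfrac d2-\delta$ (equivalently, work on the strip $0\le\Real(w)\le 1$ with weight exponent $N+(\tfrac d2-\delta)(1-w)$ and evaluate at $w=\tfrac{2}{d-1}$). Then at $w=1$ the $\varphi_N$-conjugation exactly absorbs the $(|x|/|y|)^N$ from Lemma~\ref{sawlem2}, giving the clean bound $\|F(1+i\gamma)\|_{2\to2}\le Ce^{c\gamma^2}\tau(V,0,\rho)$; at $w=0$ Lemma~\ref{JKlem} applies verbatim; and Stein interpolation finishes. There is no need to pass to $V_1=|V|+1$, and (\ref{onerel}) is not used in the proof of the proposition itself. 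A small side remark on your $d=3$ paragraph: Lemma~\ref{sawlem1} for $d=3$ gives $|[(-\Delta)^{-1}]_{N,N}(x,y)|\le C\,(-\Delta)^{-1}(x,y)$ and hence the bound $C\tau(V,0,\rho)$, not $C\tau(V,0,\rho)^{1/2}$.
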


\begin{lemma}
\label{sawlem1}
There exists a constant $C=C(d)$ such that
\begin{equation*}
\left|\left[(-\Delta)^{-1}\right]_N(x,y) \right|
\leq 
C N^{d-3}\left(\frac{|x|}{|y|} \right)^N (-\Delta)^{-1}(x,y)
\end{equation*}
for all $x$, $y \in \mathbb R^d$ and all positive integers $N$.
\end{lemma}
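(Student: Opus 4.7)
My plan rests on identifying $[(-\Delta)^{-1}]_N(x,y)$ with the $N$-th Taylor remainder at $x=0$ of $f_y(x):=|x-y|^{2-d}$, and controlling it through the classical multipole (Gegenbauer) expansion. Specializing the Gegenbauer generating function
$$(1-2rt+r^2)^{-(d-2)/2} = \sum_{k=0}^\infty r^k\,C_k^{(d-2)/2}(t),\qquad |r|<1,\ |t|\leq 1,$$
at $t=\hat{x}\cdot\hat{y}$ and $r=|x|/|y|$ yields the convergent series
$$|x-y|^{2-d} = |y|^{2-d}\sum_{k=0}^\infty (|x|/|y|)^k\,C_k^{(d-2)/2}(\hat{x}\cdot\hat{y})\qquad (|x|<|y|).$$
Being a convergent power series in $x$ near $0$, it \emph{is} the Taylor series of $f_y$ at the origin; the degree-$(N-1)$ Taylor polynomial subtracted in the definition of $[(-\Delta)^{-1}]_N$ coincides with its partial sum. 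Writing $r:=|x|/|y|$ and $\cos\theta:=\hat{x}\cdot\hat{y}$, this gives
$$[(-\Delta)^{-1}]_N(x,y) = c_2\,|y|^{2-d}\sum_{k=N}^\infty r^k\,C_k^{(d-2)/2}(\cos\theta),$$
which reduces the lemma to a bound on this Gegenbauer tail.

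I would then combine this identity with the standard pointwise estimate $|C_k^{(d-2)/2}(\cos\theta)|\le C_k^{(d-2)/2}(1)=\binom{k+d-3}{k}\le C(k{+}1)^{d-3}$ valid on $[-1,1]$, and split in $r$. For $r\le 1/2$ the geometric factor dominates: the shift $k=N+j$ together with $(N+j+1)^{d-3}\le C(N^{d-3}+j^{d-3})$ yields $\sum_{k\ge N}r^k(k{+}1)^{d-3}\le CN^{d-3}r^N$, and since $|x-y|\asymp|y|$ in this regime the target $CN^{d-3}r^N|x-y|^{2-d}$ is immediate. For $1/2\le r<1$ I would use the closed-form identity $\sum_{k\ge 0}r^k\binom{k+d-3}{k}=(1-r)^{-(d-2)}$ together with the same shift to produce an estimate of the shape $CN^{d-3}r^N\bigl[(1-r)^{-1}+N^{-(d-3)}(1-r)^{-(d-2)}\bigr]$, to be compared with $|x-y|^{2-d}/|y|^{2-d}=(1-2r\cos\theta+r^2)^{-(d-2)/2}$.

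The principal obstacle is the subregime where $r$ is close to $1$ while $\cos\theta$ is bounded away from $1$: the crude bound $|C_k^{(d-2)/2}(\cos\theta)|\le C_k^{(d-2)/2}(1)$ discards the $\theta$-oscillation of the polynomial and inflates the estimate by a spurious factor $(1-r)^{-(d-2)}$, whereas the actual target $(1-2r\cos\theta+r^2)^{-(d-2)/2}$ remains of order one. To close this case I would either replace the crude bound by the sharper asymptotic estimate $|C_k^{(d-2)/2}(\cos\theta)|\le Ck^{(d-4)/2}|\sin\theta|^{-(d-2)/2}$ on $\theta\in(0,\pi)$, interpolating it against the crude bound in a shrinking neighbourhood of the poles $\theta=0,\pi$; alternatively, one can estimate the tail directly through Cauchy's formula applied to the generating function $(1-2z\cos\theta+z^2)^{-(d-2)/2}$ with contour radius $\rho=r+(1-r)/N$, so as to pay only a factor $(r/\rho)^N\sim e^{-1}$ in decoupling from the singularity of the integrand. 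For the regime $|x|\ge|y|$, in which the multipole series diverges, the factor $(|x|/|y|)^N\ge 1$ on the right is already ample and one estimates the Taylor polynomial directly, using $|C_k^{(d-2)/2}|\le C(k{+}1)^{d-3}$ and the lower bound $|x-y|^{2-d}\ge(|x|+|y|)^{2-d}\gtrsim|x|^{2-d}$ to sum the resulting geometric-times-power series.
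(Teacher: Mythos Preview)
Your Gegenbauer setup is correct and amounts to the same two-dimensional reduction the paper uses (cf.\ its proof of Lemma~\ref{sawlem2}); the regime $r\le\tfrac12$ goes through, and you correctly isolate the obstruction at $\tfrac12<r<1$ with $\theta$ bounded away from $0,\pi$. But neither of your fixes closes that window. The sharper \emph{modulus} bound $|C_k^{(d-2)/2}(\cos\theta)|\le Ck^{(d-4)/2}|\sin\theta|^{-(d-2)/2}$ still sums absolute values: for fixed $\theta\in(0,\pi)$ and fixed $N$, $\sum_{k\ge N}r^k k^{(d-4)/2}\asymp(1-r)^{-(d-2)/2}\to\infty$ as $r\to1$, while the target $N^{d-3}r^N(1-2r\cos\theta+r^2)^{-(d-2)/2}$ stays bounded. (Already for $d=3$, $\theta=\pi/2$: $\sum_{k\ge N}r^k|P_k(0)|\sim\sum r^kk^{-1/2}$ diverges as $r\to1$, whereas the lemma asks for $O(1)$.) No estimate on $|C_k|$ alone can succeed here. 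Your Cauchy sketch is closer in spirit, but the arithmetic is off: with $\rho=r+(1-r)/N$ one has $(r/\rho)^N\to e^{-(1-r)/r}$, which is near $1$ (not $e^{-1}$) for $r$ near $1$, and the crude bound on the contour integral then still carries a factor of order $N(1-r)^{-d/2}$, again divergent as $r\to1$. Making the Cauchy route work would require a deformed contour or a genuine local analysis near $e^{\pm i\theta}$, not a parameter tweak.

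The paper does not prove the lemma from scratch (it cites Sawyer), but the missing mechanism is on display in its proof of Lemma~\ref{sawlem2}: in the range $1<t<2$ (and symmetrically $\tfrac12<t<1$) one performs \emph{two successive Abel summations} on the partial sum $P_{N-1}(t,\theta)$. Summation by parts converts the oscillatory coefficients into summable differences, while the resulting Dirichlet kernels $D_k(z)=\sum_{j\le k}z^j$ produce exactly the factor $|1-te^{i\theta}|^{-1}$ on the right-hand side. This is precisely the cancellation your termwise bounds discard, and it is the step your plan is missing.
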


In turn, the proof of Proposition \ref{ourlem} in the case $d=3$ follows immediately from Lemma \ref{sawlem1} which is a simple consequence of Lemma 1 in \cite{Saw} (i.e., Lemma \ref{sawlem2} below for $\gamma=0$). In the case that $d \geq 4$ we prove Proposition \ref{ourlem} using Stein's interpolation theorem and the estimates of Lemma \ref{JKlem}, which is due to 
D.~Jerison and C.~Kenig \cite{JK}, and Lemma \ref{sawlem2}, which generalizes the inequalities considered in \cite{Saw} and \cite{Ste} (cf. Lemma 1 in \cite{Saw} and Lemma 5 in \cite{Ste}).

\begin{lemma}[\cite{JK}]
\label{JKlem}
There exist constants $C_2=C_2(\rho_1,\rho_2,\delta,d)$ and $c_2=c_2(\rho_1,\rho_2,\delta,d)>0$ such that
\begin{equation*}
\|\mathbf{1}_{B(\rho_1 \setminus a)} \left[(-\Delta)^{-i\gamma}\right]_{N,N+\frac{d}{2}-\delta} \mathbf{1}_{B(\rho_2 \setminus a)}\|_{2 \mapsto 2} \leq C_2 e^{c_2|\gamma|},
\end{equation*}
where $0<\delta<1/2$, for all $\gamma \in \mathbb R$ and all positive integers $N$.
\end{lemma}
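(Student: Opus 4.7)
The plan is to prove this estimate by decomposing in spherical harmonics and reducing to a family of one-dimensional Mellin multiplier bounds. First, I would note that $(-\Delta)^{-i\gamma}$ is unitary on $L^2(\mathbb R^d)$ (its Fourier symbol $|\xi|^{-2i\gamma}$ has modulus $1$), and that the Riesz kernel $c_{2i\gamma}|x-y|^{2i\gamma-d}$ expands via the Gegenbauer addition formula into a series indexed by $\ell\geq 0$ whose terms act diagonally on the spherical-harmonic decomposition $L^2(\mathbb R^d)=\bigoplus_\ell\mathcal H_\ell\otimes L^2((0,\infty);r^{d-1}dr)$.

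For $f$ supported in $B(\rho_2\setminus a)$, expand $f(r\omega)=\sum_{\ell,m}f_{\ell,m}(r)Y_{\ell,m}(\omega)$. The role of subtracting the degree-$(N-1)$ Taylor polynomial of $|x-y|^{2i\gamma-d}$ in $x$ at the origin is, relative to this decomposition, to eliminate the polynomial-in-$x$ pieces of the modes with $\ell<N$, which are exactly the terms that would otherwise destabilize the weighted estimate. After conjugation by $\varphi_t$ with $t=N+d/2-\delta$ and the change of variables $r=e^s$, the operator restricted to each $\ell$-subspace becomes a convolution in $s\in\mathbb R$ whose Mellin multiplier is an explicit ratio of gamma functions with arguments depending affinely on $\xi\in\mathbb R$, $\gamma$, $\ell$, $N$, $d$, $\delta$.

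I would then bound this multiplier in $L^\infty_\xi$ via Stirling's formula. The alignment $t=N+d/2-\delta$ with Taylor order $N$ is chosen precisely so that, for every $\ell$, the real parts of the gamma-function arguments remain at uniform distance $\delta>0$ from the pole line, while the imaginary shift by $\gamma$ contributes at most the exponential factor $e^{c_2|\gamma|}$ via the classical asymptotic $|\Gamma(\sigma+i\tau)|\sim \sqrt{2\pi}|\tau|^{\sigma-1/2}e^{-\pi|\tau|/2}$ as $|\tau|\to\infty$. Plancherel on the sphere (in $\omega$) and on the line (in $s$) assembles the per-mode bound into the claimed $L^2\to L^2$ estimate; the annular cutoffs $\mathbf 1_{B(\rho_i\setminus a)}$ merely restrict $s$ to a bounded interval determined by $\rho_1,\rho_2,a$, contributing only a constant depending on $\rho_1,\rho_2,\delta,d$.

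The main obstacle is uniformity in $N$. A naive estimate of the conjugated kernel $|x|^{-t}|x-y|^{2i\gamma-d}|y|^t$ with $t$ of order $N$ produces an operator whose norm grows with $N$; the whole point is that the Taylor subtraction of order exactly $N$ cancels this growth by removing the problematic low-$\ell$ content, and in Mellin language it keeps the pole structure of each $\ell$-multiplier fixed at distance $\delta$ from the critical line uniformly in $N$ and $\ell$. Executing this cancellation rigorously — identifying the precise gamma factors in the Gegenbauer expansion, verifying that the subtracted Taylor polynomial removes exactly the offending terms, and applying Stirling uniformly — is where the bulk of the technical work lies, and is the content that was carried out in \cite{JK}.
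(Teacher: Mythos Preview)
Your outline is faithful to the approach of Jerison and Kenig: spherical-harmonic decomposition, reduction to one-dimensional Mellin multipliers via the Gegenbauer expansion, and uniform control of the resulting gamma-function ratios through Stirling's asymptotics, with the Taylor subtraction of order $N$ serving precisely to keep the pole structure at fixed distance $\delta$ from the critical line independently of $N$. Note, however, that the present paper does not itself prove this lemma; it simply quotes it from \cite{JK} and uses it as a black box in the Stein interpolation argument for Proposition~\ref{ourlem}. So there is no ``paper's own proof'' to compare against beyond the citation, and your sketch is essentially a summary of what \cite{JK} contains.
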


\begin{lemma}
\label{sawlem2}
There exist constants $C_1=C_1(d)$ and $c_1=c_1(d)>0$ such that
\begin{equation*}
\left|\left[(-\Delta)^{-\frac{d-1+i\gamma}{2}}\right]_N(x,y) \right| \leq C_1e^{c_1\gamma^2} \left(\frac{|x|}{|y|} \right)^N (-\Delta)^{-\frac{d-1}{2}}(x,y)
\end{equation*}
for all $x$, $y \in \mathbb R^d$, all $\gamma \in \mathbb R$ and all positive integers $N$.
\end{lemma}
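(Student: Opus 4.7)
The plan is to adapt the complex-analytic proof of Sawyer's Lemma~1 in \cite{Saw} (the special case $\gamma=0$) while tracking the dependence on the imaginary parameter $\gamma$.  By the defining formula,
\[
\left[(-\Delta)^{-(d-1+i\gamma)/2}\right]_N(x,y)=c_{d-1+i\gamma}\left(|x-y|^{-1+i\gamma}-T_{N-1}(x;y,\gamma)\right),
\]
where $T_{N-1}$ is the $(N{-}1)$-th Taylor polynomial in $x$ at $x=0$ of $x\mapsto|x-y|^{-1+i\gamma}$; Stirling's formula applied to $c_{d-1+i\gamma}=\Gamma((1-i\gamma)/2)/(\pi^{d/2}\,2^{d-1+i\gamma}\,\Gamma((d-1+i\gamma)/2))$ yields $|c_{d-1+i\gamma}|\le C_d$ uniformly in $\gamma\in\mathbb R$, so this prefactor is absorbed into $C_1$.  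The main tool is the complex-analytic lift $g(\zeta):=q(\zeta)^{(-1+i\gamma)/2}$ with $q(\zeta)=|x|^2\zeta^2-2\langle x,y\rangle\zeta+|y|^2=\sum_j(\zeta x_j-y_j)^2$, the natural holomorphic extension of $|\zeta x-y|^2$ to $\zeta\in\mathbb C$.  The roots of $q$ form a complex-conjugate pair $\alpha,\bar\alpha$ with $|\alpha|=|\bar\alpha|=R:=|y|/|x|$, so $g$ is single-valued and holomorphic on the disk $|\zeta|<R$; its Taylor series at $\zeta=0$ evaluated at $\zeta=1$ recovers $T_{N-1}(x;y,\gamma)$, while $g(1)=|x-y|^{-1+i\gamma}$.

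Writing $q=|q|\,e^{i\varphi}$ with the continuous branch of $\varphi$ on $|\zeta|<R$ normalised by $\varphi(0)=0$, one has $|g(\zeta)|=|q(\zeta)|^{-1/2}\,e^{-\gamma\varphi(\zeta)/2}$.  Because the roots of $q$ lie on $|\zeta|=R$ and are complex conjugates, the winding number of $q|_{|\zeta|=r}$ around $0$ vanishes, and tracking $\arg(\zeta-\alpha)$ and $\arg(\zeta-\bar\alpha)$ separately along a radial segment from $0$ shows that $|\varphi(\zeta)|\le 2\pi$ throughout the disk.  This yields the key pointwise bound $|g(\zeta)|\le e^{\pi|\gamma|}|q(\zeta)|^{-1/2}$, and since $\pi|\gamma|\le\frac{\pi}{2}(1+\gamma^2)$ the factor $e^{\pi|\gamma|}$ is absorbed into the target $Ce^{c_1\gamma^2}$.

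In the range $|x|\le|y|/2$ (so $R\ge 2$) Cauchy's formula for the Taylor remainder gives
\[
g(1)-T_{N-1}(x;y,\gamma)=\frac{1}{2\pi i}\oint_{|\zeta|=r}\frac{g(\zeta)\,d\zeta}{\zeta^N(\zeta-1)},\qquad 1<r<R,
\]
and I would split the circle $|\zeta|=r$ into a short arc $S$ around $\zeta=1$ of diameter comparable to $|x-y|/|x|$ and its complement $T$.  On $S$, the quadratic expansion $q(\zeta)=|x-y|^2+q'(1)(\zeta-1)+|x|^2(\zeta-1)^2$ together with $q(1)=|x-y|^2$ gives $|q(\zeta)|^{1/2}\ge c|x-y|$, so the contribution of $S$ is at most a constant times $e^{c\gamma^2}\,r^{-(N-1)}/|x-y|$; on $T$ one has $|\zeta-1|\gtrsim r-1$ and $|q(\zeta)|^{1/2}\gtrsim|y|-r|x|$, and its contribution fits the same target.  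Choosing $r$ so that $r^{-N}\asymp(|x|/|y|)^N$ and balancing both pieces then yields the desired bound $C\,e^{c_1\gamma^2}(|x|/|y|)^N|x-y|^{-1}$.  The complementary range $|x|>|y|/2$ is handled by the triangle inequality applied term by term to the Taylor polynomial, using the pointwise bound $C^k k!\,e^{c\gamma^2}|y|^{-1-k}$ on the $k$-th order Taylor coefficients (obtained by induction from Leibniz/Fa\`a di Bruno applied to $(|x-y|^2)^{(-1+i\gamma)/2}$); in this regime $(|x|/|y|)^N\ge 2^{-N}$ and $|x-y|$ is comparable to $|y|$ up to an absolute constant, so the claim reduces to an absolute-constant geometric sum.

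The main obstacle is the contour split in the first case: a crude maximum-modulus estimate of the integrand over the full circle $|\zeta|=r$ loses either a factor $2^N$ (if $r$ is taken at a fixed fraction of $R$) or $(R-1)^{-1}\sim N|x|/|y|$ (if $r$ is chosen too close to $R$).  The resolution exploits that $|g(\zeta)|$ is of the large size $|x-y|^{-1}$ only on a small arc near $\zeta=1$ and of size at most $(|y|-r|x|)^{-1}$ elsewhere, combined with the short length of that arc; this careful accounting, essentially an elliptic-integral-type estimate (cf.\ the Gegenbauer generating-function derivation for $\gamma=0$), is what recovers the sharp factorisation $(|x|/|y|)^N|x-y|^{-1}$ with $\gamma$-dependence only through the factor $e^{c_1\gamma^2}$.
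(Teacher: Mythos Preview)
Your complex-analytic route via the one-variable lift $g(\zeta)=q(\zeta)^{(-1+i\gamma)/2}$ is a reasonable alternative, and the argument bound $|\varphi(\zeta)|\le 2\pi$ giving $|g(\zeta)|\le e^{\pi|\gamma|}|q(\zeta)|^{-1/2}$ is correct. The paper proceeds differently: it reduces by rotation and dilation to bounding $|1-te^{i\theta}|^{-1-i\gamma}-P_{N-1}(t,\theta)$ with $t=|x|/|y|$, writes $P_{N-1}$ explicitly with coefficients $a_m^\gamma(\theta)$ built from the products $\prod_{j\le k}\bigl(1+(-\tfrac12+\tfrac{i\gamma}{2})/j\bigr)$, proves the \emph{sharp} uniform estimate $|a_m^\gamma(\theta)|\le e^{2c\gamma^2}$ (no growth in $m$), and then treats the four ranges $t\ge 2$, $1<t<2$, $\tfrac12<t<1$, $0\le t\le\tfrac12$ separately, the two middle ones by a double summation by parts.

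Your handling of the range $|x|>|y|/2$ has a genuine gap. The claim that ``$|x-y|$ is comparable to $|y|$ up to an absolute constant'' is false: for $x=(1+\epsilon)y$ one has $|x-y|/|y|=\epsilon$, arbitrarily small. Moreover, any Leibniz/Fa\`a--di--Bruno bound on the derivatives gives at best $|\partial^\alpha f(0)|\le C^{|\alpha|}|\alpha|!\,e^{c\gamma^2}|y|^{-1-|\alpha|}$ with some $C>1$, so the termwise triangle inequality yields $|T_{N-1}|\lesssim e^{c\gamma^2}|y|^{-1}\sum_{k<N}(C'|x|/|y|)^k$; when $|x|/|y|\ge 1$ this is of size $(C')^N(|x|/|y|)^N|y|^{-1}$ and misses the target $(|x|/|y|)^N|x-y|^{-1}$ by the $N$-dependent factor $(C')^N|x-y|/|y|$. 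In the sub-range $\tfrac12<|x|/|y|<1$ even the single term $|g(1)|=|x-y|^{-1}$ already exceeds $(|x|/|y|)^N|x-y|^{-1}$ by $2^N$. This is precisely the regime the paper resolves by summation by parts, and that argument hinges on the sharp coefficient bound $|a_m^\gamma|\le e^{2c\gamma^2}$ with constant $1$ (coming from the identity $a_m^0(0)=1$), which a generic Cauchy/Fa\`a--di--Bruno estimate cannot recover. Finally, note that your Cauchy remainder formula with a circle $1<r<R$ is unavailable once $|x|>|y|$ (then $R<1$), so the contour idea cannot be pushed into that range either; you have placed the contour-split machinery in the region $|x|\le|y|/2$ where $|x-y|\ge|y|/2$ is never small and it is not needed, and dropped it exactly where it would matter.
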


We prove Lemma \ref{sawlem2} at the end of this section.

\begin{proof}[Proof of Proposition \ref{ourlem}]
In the case that $d=3$ result follows immediately from Lemma \ref{sawlem1}, proved in \cite{Saw}.
In the case $d \geq 4$ the proof can be obtained, using Lemmas \ref{JKlem} and \ref{sawlem2}, by making use of Stein's interpolation theorem (see, e.g., \cite{SW}). Indeed, consider the operator-valued function
\begin{equation*}
F(z):=\mathbf{1}_{B(\rho \setminus a)} |V|^{\frac{d-1}{4}z}\varphi_{N+\left(\frac{d}{2}-\delta\right)(1-z)} \left[(-\Delta)^{-\frac{d-1}{2}z}\right]_N\varphi_{N+\left(\frac{d}{2}-\delta\right)(1-z)}^{-1}|V|^{\frac{d-1}{4}z}\mathbf{1}_{B(\rho \setminus a)} 
\end{equation*}
defined on the strip $\{z \in \mathbb C: 0 \leq \Real(z) \leq 1\}$. 
By Lemma \ref{JKlem},
\begin{equation*}
\|F(i\gamma)\|_{2 \mapsto 2} \leq C_2e^{c_2|\gamma|}, \quad \gamma \in \mathbb R,
\end{equation*}
and by Lemma \ref{sawlem2} and definition of norm $\tau(V,0,\rho)$ (see (\ref{taudef}))
\begin{equation*}
\|F(1+i\gamma)\|_{2 \mapsto 2} \leq \tau(V,0,\rho) C_1e^{c_1\gamma^2}, \quad \gamma \in \mathbb R.
\end{equation*}
Together with obvious observations about analyticity of $F$ this implies that 
$F$ satisfies all conditions of Stein's interpolation theorem. In particular, $F\bigl(\frac{2}{d-1}\bigr)$ is a bounded $L^2 \mapsto L^2$,
which completes the proof of Proposition \ref{ourlem}.
\end{proof}

\begin{proof}[Proof of Theorem \ref{mainthm}]
Let $u \in Y_V^{\weak}$. Without loss of generality we may assume $u \equiv 0$ on $B(0,a)$ for $a>0$ sufficiently small, such that there exists $\rho>a$ with the properties $\rho<1$ and $\bar{B}(0,3\rho) \subset \Omega$.
In order to prove that $u$ vanishes on $\Omega$ it suffices to show that $u \equiv 0$ on $B(0,\rho)$ for any such $\rho$.

Let $\eta \in C_0^\infty(\Omega)$ be such that $0 \leq \eta \leq 1$, $\eta \equiv 1$ on $B(0,2\rho)$, $\eta \equiv 0$ on $\Omega \setminus B(0,3\rho)$, $|\nabla \eta| \leq \frac{c}{\rho}$, $|\Delta \eta| \leq \frac{c}{\rho^2}$.
Let $E_\eta(u):=2 \nabla \eta \nabla u+u\Delta \eta \in X_1$.  Denote $u_\eta:=u\eta$. Since $\mathcal L^{2,1}_{\loc}(\Omega) \subset H_{\loc}^{1,p}(\Omega)$, $p<\frac{d}{d-1}$, we have $E_\eta(u) \in L_{\comp}^1(\Omega)$
and hence
\begin{equation*}
\Delta u_\eta=\eta\Delta u+E_\eta(u)
\end{equation*}
implies
$\Delta u_\eta \in L_{\comp}^1(\Omega)$. Thus, we can write
\begin{equation*}
u_\eta=(-\Delta)^{-1}(-\Delta u_\eta).
\end{equation*}
The standard limiting argument (involving consideration of $C^\infty_0$-mollifiers, subtraction of Taylor polynomial of degree $N - 1$ at
$0$ of function $u_\eta$ and interchanging the signs of differentiation and integration) allows us to conclude further
\begin{equation}
\label{identity}
u_\eta=[(-\Delta)^{-1}]_N(-\Delta u_\eta).
\end{equation}
Let us denote $\mathbf{1}_{B(\rho)}^c:=1-\mathbf{1}_{B(\rho)}$, so that $\Delta u_\eta=(\mathbf{1}_{B(\rho \setminus a)}+\mathbf{1}_{B(\rho)}^c) \Delta u_\eta$. Observe that 
\begin{equation*}
\supp~\eta\Delta u\subset \bar{B}(0,3\rho) \setminus B(0,a), \quad \supp~ E_\eta(u)  \subset \bar{B}(0,3\rho) \setminus B(0,2\rho)
\end{equation*}
and, thus, $\mathbf{1}_{B(\rho)}^c \eta\Delta u=\mathbf{1}_{B(3\rho \setminus \rho)} \Delta u$, $\mathbf{1}_{B(\rho)}^c E_\eta(u)=\mathbf{1}_{B(3\rho \setminus 2\rho)} E_\eta(u)$.
Identity (\ref{identity}) implies then
\begin{multline}
\notag
\mathbf{1}_{B(\rho)} V_1^{\frac{1}{2}}\varphi_{N_d^{\delta}} u=\mathbf{1}_{B(\rho)} V_1^{\frac{1}{2}}[(-\Delta)^{-1}]_{N,N_{d}^\delta}V_1^{\frac{1}{2}}\mathbf{1}_{B(\rho \setminus a)}\varphi_{N_{d}^\delta}\frac{-\Delta u}{V_1^{\frac{1}{2}}}+\\+\mathbf{1}_{B(\rho)} V_1^{\frac{1}{2}}[(-\Delta)^{-1}]_{N,N_{d}^\delta} V_1^{\frac{1}{2}}\mathbf{1}^c_{B(\rho)}\varphi_{N_{d}^\delta}\frac{-\eta \Delta u }{V_1^{\frac{1}{2}}}+\\+
\mathbf{1}_{B(\rho)} V_1^{\frac{1}{2}}[(-\Delta)^{-1}]_{N,N_{d}^\delta} \mathbf{1}_{B(3\rho \setminus 2\rho)}\varphi_{N_{d}^\delta}(-E_\eta(u))
\end{multline}
(we assume that $0<\delta<1/2$ is fixed throughout the proof)
or,  letting $I$ to denote the left hand side and, respectively, $I_1$ ,
$I_1^c$ and $I_2$  the three summands of the right hand side of the last
equality, we rewrite the latter as
\begin{equation*}
I=I_1+I_1^c+I_2.
\end{equation*}
We would like to emphasize that a priori $I \not\in L^2$, but only $I \in L^s$, $s<d/(d-2)$. Hence, in the case that $d \geq 4$ we must first prove that $I_1$, $I_1^c$ and $I_2$ are in $L^2$, so that $I \in L^2$ as well.
Therefore, we obtain estimates $\|I_1^c\|_2 \leq c_1 \varphi_{N_d^\delta}(\rho)$, $\|I_2\|_2 \leq c_2 \varphi_{N_d^\delta}(\rho)$ and $\|I_1\|_2 \leq \alpha \|I\|_2$, $\alpha<1$, and conclude that
$(1-\alpha) \|I\|_2 \leq (c_1+c_2) \varphi_{N_d^\delta}(\rho)$, and therefore that
\begin{equation*}
\left\|\mathbf{1}_{B(\rho \setminus a)} \frac{\varphi_{N_d^\delta}}{\varphi_{N_d^\delta}(\rho)}u\right\|_2 \leq \frac{c_1+c_2}{1-\alpha}.
\end{equation*}
Letting $N \to \infty$, we derive identity $u \equiv 0$ in $B(0,\rho)$.

1) 
\textit{Proof of $I_1 \in L^2$ and $\|I_1\|_2 \leq \alpha \|I\|_2$, $\alpha<1$.}
Observe that $$\mathbf{1}_{B(\rho \setminus a)}\frac{|\Delta u|}{V_1^{1/2}}  \leq \mathbf{1}_{B(\rho)}\frac{|V||u|}{V_1^{1/2}}  \leq \mathbf{1}_{B(\rho)}|V|^{1/2}|u| \in X_2 \quad (\text{since } u \in Y_V^{\weak}),$$ 
and hence, according to Proposition \ref{ourlem},
\begin{equation*}
\|I_1\|_{2} \leq \left\|\mathbf{1}_{B(\rho \setminus a)} V_1^{\frac{1}{2}}[(-\Delta)^{-1}]_{N,N_{d}^\delta}V_1^{\frac{1}{2}}\mathbf{1}_{B(\rho \setminus a)}\right\|_{2 \mapsto 2}\left\|\mathbf{1}_{B(\rho)}\varphi_{N_{d}^\delta}|V|^{\frac{1}{2}} u\right\|_{2} \leq \beta_1 \|\mathbf{1}_{B(\rho)} \varphi_{N_{d}^\delta}|V|^{\frac{1}{2}} u\|_{2}.
\end{equation*}
Here
$\beta_1:=C \tau(V_1,0,\rho)^{\frac{1}{d-1}}$,
where $C$ is the constant in formulation of Proposition \ref{ourlem}. We may assume that $\beta_1<1$ (see (\ref{onerel})).

2) \textit{Proof of $\|I_1^c\|_2 \leq c_1 \varphi_{N_d^\delta}(\rho)$.} By Proposition \ref{ourlem},
\begin{multline}
\notag
\|I_1^c\|_{2} \leq \left\|\mathbf{1}_{B(\rho \setminus a)} V_1^{\frac{1}{2}}[(-\Delta)^{-1}]_{N,N_{d}^\delta}V_1^{\frac{1}{2}}\mathbf{1}_{B(3\rho \setminus \rho)}\right\|_{2 \mapsto 2}\left\|\mathbf{1}_{B(\rho)}^c\varphi_{N_{d}^\delta}|V|^{\frac{1}{2}} u\right\|_{2} \leq \\ \leq \beta_2 \varphi_{N_{d}^\delta}(\rho)\|\mathbf{1}_{B(3\rho)} |V|^{1/2}u\|,
\end{multline}
where
$\beta_2:=C \tau(V_1,0,3\rho)^{\frac{1}{d-1}}<\infty$. \\ [-3mm]

3) \textit{Proof of $\|I_2\|_2 \leq c_2 \varphi_{N_d^\delta}(\rho)$.} We need to derive an estimate of the form
\begin{equation*}
\|I_2\|_2 \leq C \varphi_{N_{d}^\delta}(\rho) \|E_\eta(u)\|_1,
\end{equation*}
where $C$ can depend on $d$, $\delta$, $a$, $\rho$, $\|\mathbf{1}_{B(\rho)} V\|_1$, but not on $N$. We have
\begin{multline}
\notag
\|I_2\|_2 \leq \left\|\mathbf{1}_{B(\rho \setminus a)} V_1^{1/2} [(-\Delta)^{-1}]_{N,N_{d}^\delta} \mathbf{1}_{B(3\rho \setminus 2\rho)}\right\|_{1 \mapsto 2}\left\|\mathbf{1}_{B(3\rho \setminus 2\rho)} \varphi_{N_{d}^\delta} E_\eta(u)\right\|_1 \leq \\ \left\|\mathbf{1}_{B(\rho \setminus a)} V_1^{1/2} [(-\Delta)^{-1}]_{N,N_{d}^\delta} \mathbf{1}_{B(3\rho \setminus 2\rho)}\right\|_{1 \mapsto 2} 2^{-N}\varphi_{N_{d}^\delta}(\rho) \left\|E_\eta(u)\right\|_1.
\end{multline}
Now for $h \in L^1(\mathbb R^d)$, in virtue of Lemma \ref{sawlem1},
\begin{multline}
\notag
\|\mathbf{1}_{B(\rho \setminus a)} V_1^{1/2} [(-\Delta)^{-1}]_{N,N_{d}^\delta} \mathbf{1}_{B(3\rho \setminus 2\rho)} h\|_2 \leq \\ \leq \|\mathbf{1}_{B(\rho)} V_1^{1/2}\|_2 \|\mathbf{1}_{B(\rho \setminus a)} [(-\Delta)^{-1}]_{N,N_d^\delta} \mathbf{1}_{B(3\rho \setminus 2\rho)} h\|_{\infty} \leq \\
\leq \|\mathbf{1}_{B(\rho)} V_1^{1/2}\|_2 CN^{d-3} \varphi_{\left(\frac{d}{2}-\delta\right)\frac{d-3}{d-1}}(a)\varphi_{\left(\frac{d}{2}-\delta\right)\frac{d-3}{d-1}}^{-1}(3\rho) \|\mathbf{1}_{B(\rho)} (-\Delta)^{-1} \mathbf{1}_{B(3\rho \setminus 2\rho)}h\|_\infty \leq \\ \leq 
(\|\mathbf{1}_{B(\rho)}\|_1+\|\mathbf{1}_{B(\rho)} V\|_1)^{1/2} CN^{d-3} \left(\frac{3\rho}{a} \right)^{\left(\frac{d}{2}-\delta\right)\frac{d-3}{d-1}} M_{\rho},
\end{multline}
where 
\begin{equation*}
M_{\rho}:= C_2 \esssup_{x \in B(0,\rho)} \int_{2\rho \leq |y| \leq 3\rho} |x-y|^{2-d}|h(y)|dy \leq C_2 \rho^{2-d}\|h\|_1.
\end{equation*}
Therefore
\begin{multline}
\notag
\left\|\mathbf{1}_{B(\rho \setminus a)} V_1^{1/2} [(-\Delta)^{-1}]_{N,N_{d}^\delta} \mathbf{1}_{B(3\rho \setminus 2\rho)}\right\|_{1 \mapsto 2} \leq \\ \leq (\|\mathbf{1}_B(\rho)\|_1+\|\mathbf{1}_{B(\rho)} V\|_1)^{1/2} CC_2 N^{d-3} \left(\frac{3\rho}{a} \right)^{\left(\frac{d}{2}-\delta\right)\frac{d-3}{d-1}}\rho^{2-d}
\end{multline}
Hence, there exists a constant $\hat{C}=\hat{C}(d,\delta,a,\rho,\|\mathbf{1}_{B(\rho)} V\|_1)$ such that
\begin{equation*}
\|I_2\|_2 \leq \hat{C}N^{d-3} 2^{-N} \varphi_{N_d^\delta}(\rho)\|E_\eta(u)\|_1,
\end{equation*}
which implies the required estimate.
\end{proof}

\begin{proof}[Proof of Lemma \ref{sawlem2}]
The proof essentially follows the argument in \cite{Saw}.
Put
\begin{equation*}
\left[\begin{array}{c} -\frac{1}{2}+\frac{i\gamma}{2} \\ k \end{array} \right]:=\prod_{j=1}^k \left(1+\frac{-\frac{1}{2}+\frac{i\gamma}{2}}{j} \right).
\end{equation*}
Then
\begin{multline}
\label{aest111}
\left|~\left[\begin{array}{c} -\frac{1}{2}+\frac{i\gamma}{2} \\ k \end{array} \right]~\right|=\prod_{j=1}^k \left(1-\frac{1}{2j} \right) \prod_{j=1}^k \sqrt{1+\frac{\gamma^2}{(2j-1)^2}} \leq \\ \leq \prod_{j=1}^k \left(1-\frac{1}{2j} \right) e^{\gamma^2\sum_{j=1}^k \frac{1}{(2j-1)^2}} \leq \prod_{j=1}^k \left(1-\frac{1}{2j} \right) e^{\gamma^2 c}, \quad c=\frac{\pi^2}{48}.
\end{multline}
We may assume, after a dilation and rotation, that $x=(x_1,x_2,0,\dots,0)$, $y=(1,0,\dots,0)$. Thus, passing to polar coordinates $(x_1,x_2)=te^{i\theta}$, we reduce our inequality to inequality
\begin{equation*}
\left||1-te^{i\theta}|^{-1-i\gamma}-P_{N-1}(t,\theta) \right| \leq C e^{c\gamma^2} t^{N}|1-te^{i\theta}|^{-1}, \quad \text{ for all } \gamma \in \mathbb R
\end{equation*}
and for appropriate $C>0$, $c>0$. Here $P_{N-1}(t,\theta)$ denotes the Taylor polynomial of degree $N-1$ at point $z=0$ of function $z=te^{i\theta} \mapsto |1-z|^{-1}$. Similarly to \cite{Saw}, via summation of geometric series we obtain a representation
\begin{equation*}
P_{N-1}(t,\theta)=\sum_{m=0}^{N-1} a^{\gamma}_m(\theta)t^m,
\end{equation*}
where 
\begin{equation*}
a^{\gamma}_m(\theta):=\sum_{k+l=m} \left[\begin{array}{c} -\frac{1}{2}+\frac{i\gamma}{2} \\ l \end{array} \right]~\left[\begin{array}{c} -\frac{1}{2}+\frac{i\gamma}{2} \\ k \end{array} \right] e^{i(k-l)\theta}.
\end{equation*}
Note that
\begin{equation*}
a_m^{0}(0)=\sum_{k+l=m}\left[\begin{array}{c} -\frac{1}{2} \\ l \end{array} \right]~\left[\begin{array}{c} -\frac{1}{2} \\ k \end{array} \right]=1
\end{equation*}
since
\begin{equation*}
\sum_{m=0}^\infty a_m^{0}(0)t^m=(1-t)^{-1}=\sum_{m=0}^\infty t^m.
\end{equation*}
Now estimate (\ref{aest111}) and identity $a_m^{0}(0)=1$ yield
\begin{equation*}
|a^{\gamma}_m(\theta)| \leq \sum_{k+l=m}\left|~\left[\begin{array}{c} -\frac{1}{2} \\ l \end{array} \right]~\right|~\left|~\left[\begin{array}{c} -\frac{1}{2} \\ k \end{array} \right]~\right|e^{2c\gamma^2}=e^{2c\gamma^2}.
\end{equation*}
We have to distinguish between four cases $t \geq 2$, $1<t<2$, $0\leq t \leq \frac{1}{2}$ and $\frac{1}{2} <t<1$. Below we consider only the cases $t \geq 2$ and $1<t<2$ (proofs in two other cases are similar).

If $t \geq 2$, then
\begin{equation*}
|P_{N-1}(t,\theta)| \leq \sum_{m=0}^{N-1} |a^{\gamma}_m(\theta)|t^m \leq e^{2c\gamma^2}t^N \leq \frac{3}{2} e^{2c\gamma^2}t^N|1-te^{i\theta}|^{-1}
\end{equation*}
since $1 \leq \frac{3}{2}t|1-te^{i\theta}|^{-1}$. Hence, using $\bigl|~|1-te^{i\theta}|^{-1-i\gamma}~\bigr| \leq t^N|1-te^{i\theta}|^{-1}$, it follows
\begin{equation*}
\left||1-te^{i\theta}|^{-1-i\gamma}-P_{N-1}(t,\theta) \right| \leq t^N|1-te^{i\theta}|^{-1}+\frac{3}{2} e^{2c\gamma^2}t^N|1-te^{i\theta}|^{-1} \leq C e^{2c\gamma^2}t^N|1-te^{i\theta}|^{-1}
\end{equation*}
for an appropriate $C>0$, as required.

If $1<t<2$, then, after two summations by parts, we derive
\begin{multline}
\notag
P_{N-1}(t,\theta)=\sum_{l=0}^{N-3} S \left[\begin{array}{c} -\frac{1}{2}+\frac{i\gamma}{2} \\ l \end{array} \right] D_l(\bar{z})\sum_{k=0}^{N-l-3}S\left[\begin{array}{c} -\frac{1}{2}+\frac{i\gamma}{2} \\ k \end{array} \right]D_k(z)+\\
+\sum_{l=0}^{N-2} S \left[\begin{array}{c} -\frac{1}{2}+\frac{i\gamma}{2} \\ l \end{array} \right] \left[\begin{array}{c} -\frac{1}{2}+\frac{i\gamma}{2} \\ N-l-2 \end{array} \right]D_l(\bar{z})D_{N-l-2}(z)+\\
+\sum_{k=0}^{N-1}\left[\begin{array}{c} -\frac{1}{2}+\frac{i\gamma}{2} \\ k \end{array} \right]~\left[\begin{array}{c} -\frac{1}{2}+\frac{i\gamma}{2} \\ N-k-1 \end{array} \right]z^kD_{N-1-k}(z)=J_1+J_2+J_3,
\end{multline}
where
\begin{equation*}
S\left[\begin{array}{c} \delta \\ k \end{array} \right]:=\left[\begin{array}{c} \delta \\ k \end{array} \right]-\left[\begin{array}{c} \delta \\ k+1 \end{array} \right], \quad D_k(z):=\sum_{j=0}^k z^j.
\end{equation*}
We use estimate
\begin{equation*}
\left|~S\left[\begin{array}{c} -\frac{1}{2}+\frac{i\gamma}{2} \\ k \end{array} \right]~\right|=\left|~ \left[\begin{array}{c} -\frac{1}{2}+\frac{i\gamma}{2} \\ k \end{array} \right] \left(\frac{-\frac{1}{2}+\frac{i\gamma}{2}}{1+k} \right)~\right| \leq C(k+1)^{-\frac{1}{2}}e^{c\gamma^2}
\end{equation*}
to obtain, following an argument in \cite{Saw}, that each $J_i$ ($i=1,2,3$) is majorized by $Ce^{c\gamma^2}t^N|1-te^{i\theta}|^{-1}$ for some $C>0$. Since $\bigl|~|1-te^{i\theta}|^{-1-i\gamma}~\bigr| \leq t^N|1-te^{i\theta}|^{-1}$, Lemma \ref{sawlem2} follows.
\end{proof}

\subsection{Proof of Theorem \ref{mainthm2}} Choose $\Psi_j \in C^\infty(\Omega)$ in such a way that $0 \leq \Psi_j \leq 1$, $\Psi_j(x)=1$ for $|x|>\frac{2}{j}$, $\Psi_j(x)=0$ for $|x|<\frac{1}{j}$, $|\nabla \Psi_j(x)| \leq c'j$, $|\Delta \Psi_j(x)| \leq c'j^2$.

\begin{proposition} 
\label{fourestlem}
Let $\tau(V,0,\rho)<\infty$. There exists a constant $C=C(\rho,\delta,d)>0$ such that for all positive integers $N$ and $j$
$$
\|\mathbf{1}_{B(\rho)} \Psi_j |V|^{\frac{1}{2}}[(-\Delta)^{-1}]_{N,N_d^{\delta}}|V|^{\frac{1}{2}} \Psi_j \mathbf{1}_{B(\rho)}\|_{2 \mapsto 2} \leq C \tau(V,0,\rho)^{\frac{1}{d-1}},
\leqno (E1)
$$
$$
\|\mathbf{1}_{B(\rho)} \Psi_j |V|^{\frac{1}{2}}[(-\Delta)^{-1}]_{N,N_d^{\delta}}|V|^{\frac{1}{2}} \mathbf{1}_{B(3\rho \setminus \rho)}\|_{2 \mapsto 2} \leq C \tau(V,0,3\rho)^{\frac{1}{d-1}},
\leqno (E2)
$$
$$
\left\|\mathbf{1}_{B(\rho)}\Psi_j |V|^{\frac{1}{2}} [(-\Delta)^{-1}]_{N,N_d^\delta}\mathbf{1}_{B(\frac{2}{j} \setminus \frac{1}{j})} \right\|_{p \mapsto 2} \leq C \tau(V,0,\rho)^{\frac{1}{d-1}}, 
\leqno (E3)
$$
$$
\left\|\mathbf{1}_{B(\rho)}\Psi_j |V|^{\frac{1}{2}} [(-\Delta)^{-1}]_{N,N_d^\delta}\mathbf{1}_{B(3\rho \setminus 2\rho)} \right\|_{p \mapsto 2} \leq C \tau(V,0,3\rho)^{\frac{1}{d-1}}, 
\leqno (E4)
$$
where $p=\frac{2d}{d+2}$.
\end{proposition}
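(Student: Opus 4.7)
The plan is to reduce all four bounds to the framework of Proposition \ref{ourlem}. The preliminary observation is that, since $0 \leq \Psi_j \leq 1$ and $\Psi_j \equiv 0$ on $B(1/j)$, one has the pointwise bound $\mathbf{1}_{B(\rho)}\Psi_j \leq \mathbf{1}_{B(\rho \setminus 1/j)}$. A short sesquilinear argument (pairing $\langle PAQf, g\rangle = \langle A\mathbf{1}_{\supp Q}(Qf), \mathbf{1}_{\supp P}(Pg)\rangle$, with $\|Qf\|_p \leq \|f\|_p$ and $\|Pg\|_{q'} \leq \|g\|_{q'}$) produces the general monotonicity principle
\begin{equation*}
\|PAQ\|_{p \mapsto q} \leq \|\mathbf{1}_{\supp P}\, A\, \mathbf{1}_{\supp Q}\|_{p \mapsto q}, \qquad p, q \in [1,\infty],
\end{equation*}
valid for any linear operator $A$ and any multipliers $P, Q$ with $|P|, |Q| \leq 1$. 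Applied to (E1) (with $p = q = 2$), this delivers the claim directly from Proposition \ref{ourlem} with $a = 1/j$. For (E2), one bounds both supports by $\mathbf{1}_{B(3\rho \setminus 1/j)}$ and applies Proposition \ref{ourlem} with $\rho$ replaced by $3\rho$. Constants are uniform in $j$ because Proposition \ref{ourlem}'s constant depends on $\rho, \delta, d$ but not on $a$.

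For (E3) and (E4) the same cutoff reduction leaves us needing the $L^{\bar p} \mapsto L^2$ estimate
\begin{equation*}
\|\mathbf{1}_{B(R \setminus 1/j)}|V|^{\frac{1}{2}}[(-\Delta)^{-1}]_{N, N_d^\delta}\mathbf{1}_{B_3}\|_{\bar p \mapsto 2} \leq C\, \tau(V, 0, R)^{1/(d-1)},
\end{equation*}
where $B_3 = B(2/j \setminus 1/j)$, $R = \rho$ for (E3) and $B_3 = B(3\rho \setminus 2\rho)$, $R = 3\rho$ for (E4). I would prove this by imitating the Stein interpolation in the proof of Proposition \ref{ourlem}. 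Introduce the analytic family
\begin{equation*}
G(z) := \mathbf{1}_{B(R \setminus 1/j)}|V|^{\frac{d-1}{4}z}\varphi_{N + \left(\frac{d}{2}-\delta\right)(1-z)}\bigl[(-\Delta)^{-\frac{d-1}{2}z}\bigr]_N \varphi^{-1}_{N+\left(\frac{d}{2}-\delta\right)(1-z)}\mathbf{1}_{B_3}
\end{equation*}
on $\{0 \leq \mathrm{Re}\, z \leq 1\}$, so that $G(2/(d-1))$ equals the target operator. At $z = i\gamma$, the multiplier $|V|^{i\gamma(d-1)/4}$ is unitary and Lemma \ref{JKlem} yields $\|G(i\gamma)\|_{2 \mapsto 2} \leq C_2 e^{c_2|\gamma|}$. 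At $z = 1 + i\gamma$, Lemma \ref{sawlem2} gives the pointwise kernel bound
\begin{equation*}
|G(1+i\gamma)(x, y)| \leq C e^{c\gamma^2}\mathbf{1}_{B(R \setminus 1/j)}(x)\,|V(x)|^{\frac{d-1}{4}}\,|x-y|^{1-d}\,\mathbf{1}_{B_3}(y),
\end{equation*}
which, combined with the definition of $\tau(V, 0, R)$, should yield an endpoint bound $\|G(1+i\gamma)\|_{p_1 \mapsto 2} \leq C\tau(V, 0, R)e^{c\gamma^2}$ for $p_1 = 2d/(2d-1)$ (the exponent forced by the interpolation identity $1/\bar p = (d-3)/(2(d-1)) + 2/(p_1(d-1))$ with $q_0 = q_1 = 2$, $p_0 = 2$). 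Stein's theorem at $z = 2/(d-1)$ then gives the claim.

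The main obstacle will be verifying this $z = 1 + i\gamma$ endpoint. Unlike in Proposition \ref{ourlem}, where both endpoints are $L^2 \mapsto L^2$, here the endpoint is genuinely mixed-norm, $L^{p_1} \mapsto L^2$, and involves the fractional integral $(-\Delta)^{-(d-1)/2}$; for $d \geq 3$ the Hardy--Littlewood--Sobolev inequality does not supply such a bound globally (the HLS-compatible source exponent is $2d/(3d-2) < 1$). One must exploit the compactness of the supports $B(R \setminus 1/j)$ and $B_3$, the weight $|V|^{(d-1)/4}$, and the restricted norm $\tau(V, 0, R)$, most naturally through a kernel split into the regimes $|x| \leq |y|$ (where Lemma \ref{sawlem1}'s factor $(|x|/|y|)^N$, absorbed by the $\varphi$-conjugation, governs) and $|x| > |y|$ (where $\tau$ applies directly via a $T^*T$/duality argument).
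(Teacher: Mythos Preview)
Your treatment of (E1) and (E2) via the monotonicity principle and Proposition~\ref{ourlem} is correct and matches the paper. Your interpolation setup for (E3) and (E4) is also the paper's: same analytic family, same use of Lemma~\ref{JKlem} at $z=i\gamma$, same target exponent $p_1=\frac{2d}{2d-1}$ at $z=1+i\gamma$.

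The gap is exactly where you flag it, but the resolution is much simpler than the kernel split you propose. You are right that $(-\Delta)^{-\frac{d-1}{2}}$ does not map $L^{p_1}$ to $L^2$ by Hardy--Littlewood--Sobolev alone. The paper's one-line fix is to \emph{factor} the Riesz potential,
\begin{equation*}
(-\Delta)^{-\frac{d-1}{2}}=(-\Delta)^{-\frac{d-1}{4}}\,(-\Delta)^{-\frac{d-1}{4}},
\end{equation*}
and bound the two pieces separately:
\begin{equation*}
\bigl\|\mathbf{1}_{B(R)}|V|^{\frac{d-1}{4}}(-\Delta)^{-\frac{d-1}{2}}\bigr\|_{p_1\mapsto 2}
\leq
\bigl\|\mathbf{1}_{B(R)}|V|^{\frac{d-1}{4}}(-\Delta)^{-\frac{d-1}{4}}\bigr\|_{2\mapsto 2}
\,\bigl\|(-\Delta)^{-\frac{d-1}{4}}\bigr\|_{p_1\mapsto 2}.
\end{equation*}
The first factor equals $\tau(V,0,R)^{1/2}$ by the $T^\ast T$ identity already recorded in Section~\ref{compsect}. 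The second factor is a genuine Sobolev bound: with $s=\frac{d-1}{2}$ one has $\frac{1}{p_1}-\frac{s}{d}=\frac{2d-1}{2d}-\frac{d-1}{2d}=\frac{1}{2}$, so $(-\Delta)^{-\frac{d-1}{4}}:L^{p_1}\to L^2$ is bounded. After Lemma~\ref{sawlem2} has absorbed the $\varphi$-weights and the imaginary power, this yields
\begin{equation*}
\|G(1+i\gamma)\|_{p_1\mapsto 2}\leq Ce^{c\gamma^2}\,\tau(V,0,R)^{1/2},
\end{equation*}
and Stein interpolation at $z=\frac{2}{d-1}$ finishes. (The paper in fact obtains the power $\tau^{1/(2(d-1))}$ rather than $\tau^{1/(d-1)}$; either suffices downstream, where only a bound uniform in $N$ and $j$ is used.) Your proposed dichotomy in $|x|$ versus $|y|$ buys nothing here: after the $\varphi_N$-conjugation the factor $(|x|/|y|)^N$ from Lemma~\ref{sawlem2} cancels exactly, leaving the bare kernel $|x-y|^{1-d}$ in both regimes.
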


We prove Proposition \ref{fourestlem} at the end of this section.

\begin{proof}[Proof of Theorem \ref{mainthm2}]
We use the same notations as in the proof of Theorem \ref{mainthm}. Suppose that $u \in Y_V^{\strong}$ satisfies (\ref{diffineq}) and vanishes to an infinite order at $0 \in \Omega$.
We wish to obtain an estimate of the form
\begin{equation}
\label{reqineq7}
\left\|\mathbf{1}_{B(\rho)}\frac{\varphi_{N_d^\delta}}{\varphi_{N_d^\delta}(\rho)} u\right\|_2 \leq C.
\end{equation}
Then, letting $N \to \infty$, we would derive the required identity: $u \equiv 0$ in $B(0,\rho)$. 

The same argument as in the proof of Theorem \ref{mainthm} leads us to an identity
\begin{equation*}
u_{\eta_j}=(-\Delta)^{-1}(-\Delta u_{\eta_j}), \quad \eta_j=\eta \Psi_j,
\end{equation*}
which, in turn, implies
\begin{multline}
\notag
\mathbf{1}_{B(\rho)} \Psi_j V_1^{\frac{1}{2}}\varphi_{N_d^{\delta}}u=\\=
\mathbf{1}_{B(\rho)} \Psi_j V_1^{\frac{1}{2}}[(-\Delta)^{-1}]_{N,N_d^{\delta}}V_1^{\frac{1}{2}}\varphi_{N_d^{\delta}} 
\frac{-\eta_j\Delta u}{V_1^{\frac{1}{2}}}
+\mathbf{1}_{B(\rho)} \Psi_j V_1^{\frac{1}{2}}[(-\Delta)^{-1}]_{N,N_d^{\delta}}\varphi_{N_d^{\delta}} E_j(u).
\end{multline}
Letting $I$ to denote the left hand side of the previous
identity, and, respectively, $I_1$
and $I_2$ the two summands of the right hand side, we rewrite the latter as
\begin{equation*}
I=I_1+I_2.
\end{equation*}
Here $0<\delta<1/2$ is fixed, $2/j \leq \rho$, $\Delta u_{\eta_j}=\eta_j \Delta u+E_j(u)$ and
\begin{equation*}
E_j(u):=2 \nabla \eta_j \nabla u+(\Delta \eta_j)u.
\end{equation*}
Note that $I \in L^2$, since $H^{1,p}_{\loc}(\Omega) \subset X_2$ by Sobolev embedding theorem, and $|V|^{\frac{1}{2}}u \in X_2$ by the definition of $Y_V^{\strong}$. 

Next, we expand $I_1$ as a sum $I_{11}+I_{11}^c$, where
\begin{equation*}
I_{11}:=\mathbf{1}_{B(\rho)} \Psi_j V_1^{\frac{1}{2}}[(-\Delta)^{-1}]_{N,N_d^{\delta}}V_1^{\frac{1}{2}}\mathbf{1}_{B(\rho)}\varphi_{N_d^{\delta}} 
\frac{-\Psi_j\Delta u}{V_1^{\frac{1}{2}}}
\end{equation*}
and
\begin{equation*}
I_{11}^c:=\mathbf{1}_{B(\rho)} \Psi_j V_1^{\frac{1}{2}}[(-\Delta)^{-1}]_{N,N_d^{\delta}}V_1^{\frac{1}{2}}\mathbf{1}_{B(\rho)}^c\varphi_{N_d^{\delta}} 
\frac{-\eta\Delta u}{V_1^{\frac{1}{2}}}.
\end{equation*}
Proposition \ref{fourestlem} and inequalities (E1) and (E2) imply the required estimates:
\begin{equation*}
\|I_{11}\|_2 \leq C \tau(V_1,0,\rho)^{\frac{1}{d-1}}\|I\|_2
\end{equation*}
and
\begin{equation*}
\|I_{11}^c\|_2 \leq C \varphi_{N_d^\delta}(\rho)\tau(V_1,0,3\rho)^{\frac{1}{d-1}}\|\mathbf{1}_{B(3\rho)}|V|^{\frac{1}{2}}u\|_2.
\end{equation*}

Finally, we represent $I_2$ as a sum $I_{21}+I_{22}$, where
\begin{equation*}
I_{21}:=\mathbf{1}_{B(\rho)}\Psi_j V_1^{\frac{1}{2}}[(-\Delta)^{-1}]_{N,N_d^\delta}\mathbf{1}_{B(\frac{2}{j} \setminus \frac{1}{j})} \varphi_{N_d^\delta} E_j^{(1)}(u)
\end{equation*}
and
\begin{equation*}
I_{22}:=\mathbf{1}_{B(\rho)}\Psi_j V_1^{\frac{1}{2}}[(-\Delta)^{-1}]_{N,N_d^\delta}\mathbf{1}_{B(3\rho \setminus 2\rho)} \varphi_{N_d^\delta} E_j^{(2)}(u).
\end{equation*}
Here
\begin{equation*}
E_j^{(1)}(u):=-2 \nabla \Psi_j \nabla u-(\Delta \Psi_j)u, \quad
E_j^{(2)}(u):=-2 \nabla \eta \nabla u-(\Delta \eta)u.
\end{equation*}
In order to derive an estimate on $\|I_{21}\|_2$, we expand
\begin{equation*}
I_{21}=I_{21}'+I_{21}'',
\end{equation*}
where
\begin{equation*}
I_{21}':=\mathbf{1}_{B(\rho)}\Psi_j V_1^{\frac{1}{2}}[(-\Delta)^{-1}]_{N,N_d^\delta}\mathbf{1}_{B(\frac{2}{j} \setminus \frac{1}{j})} \varphi_{N_d^\delta} (-\Delta \Psi_j)u,
\end{equation*}
\begin{equation*}
I_{21}'':=\mathbf{1}_{B(\rho)}\Psi_j V_1^{\frac{1}{2}}[(-\Delta)^{-1}]_{N,N_d^\delta}\mathbf{1}_{B(\frac{2}{j} \setminus \frac{1}{j})} \varphi_{N_d^\delta} \bigl(-2 \nabla \eta \nabla u\bigr).
\end{equation*}

1) Term $I_{21}'$ presents no problem: by (E3), 
\begin{multline}
\notag
\|I_{21}'\|_2 \leq \left\|\mathbf{1}_{B(\rho)}\Psi_j V_1^{\frac{1}{2}} [(-\Delta)^{-1}]_{N,N_d^\delta}\mathbf{1}_{B(\frac{2}{j} \setminus \frac{1}{j})}\right\|_{p \mapsto 2} \left\|\mathbf{1}_{B(\frac{2}{j} \setminus \frac{1}{j})}\varphi_{N_d^\delta}(\Delta \Psi_j)u\right\|_2 \leq \\ \leq C \tau(V_1,0,\rho)^{\frac{1}{d-1}} \left\|\mathbf{1}_{B(\frac{2}{j} \setminus \frac{1}{j})}\varphi_{N_d^\delta}(\Delta \Psi_j)u\right\|_2,
\end{multline}
where
\begin{equation*}
\left\|\mathbf{1}_{B(\frac{2}{j} \setminus \frac{1}{j})}\varphi_{N_d^\delta}(\Delta \Psi_j)u\right\|_2 \leq C j^{N_d^\delta+2} \left\|\mathbf{1}_{B(\frac{2}{j})}u\right\|_2 \to 0 \quad \text{ as } j \to \infty
\end{equation*}
by the definition of 
the SUC property.

2) In order to derive an estimate on $I_{21}''$, we once again use inequality (E3):
\begin{multline}
\notag
\|I_{21}''\|_2 \leq \left\|\mathbf{1}_{B(\rho)}\Psi_j V_1^{\frac{1}{2}} [(-\Delta)^{-1}]_{N,N_d^\delta}\mathbf{1}_{B(\frac{2}{j} \setminus \frac{1}{j})} \right\|_{p \mapsto 2}\|\mathbf{1}_{B(\frac{2}{j})}\varphi_{N_d^\delta} \nabla \Psi_j \nabla u\|_p \leq \\ \leq C \tau(V_1,0,\rho)^{\frac{1}{d-1}}\|\mathbf{1}_{B(\frac{2}{j})}\varphi_{N_d^\delta} \nabla \Psi_j \nabla u\|_p \leq 
\tilde{C} j^{N_d^\delta+1} \|\mathbf{1}_{B(\frac{2}{j})} \nabla u\|_p,
\end{multline}
where $p:=\frac{2d}{d+2}$. We must estimate $\|\mathbf{1}_{B(\frac{2}{j})}\nabla u\|_2$ by $\|\mathbf{1}_{B(\frac{4}{j})}u\|_2$ in order to apply the SUC property.
For this purpose, we make use of the following well known interpolation inequality
\begin{equation*}
\left\|\mathbf{1}_{B(\frac{2}{j})} \nabla u\right\|_p \leq C j^{\frac{d}{p}} \left(C'j^{\frac{d}{2}-1}\left\|\mathbf{1}_{B(\frac{4}{j})}u\right\|_2+j^{\frac{d+6}{2}}\left\|\mathbf{1}_{B(\frac{4}{j})}\Delta u\right\|_r \right),
\end{equation*}
where $r:=\frac{2d}{d+4}$ (see \cite{Maz}). Using differential inequality (\ref{diffineq}), we reduce the problem to the problem of finding an estimate on $\|\mathbf{1}_{B(\frac{4}{j})} Vu\|_r$ in terms of $\|\mathbf{1}_{B(\frac{4}{j})}u\|_2^{\mu}$, $\mu>0$. By H\"{o}lder inequality,
\begin{equation*}
\left\|\mathbf{1}_{B(\frac{4}{j})}Vu\right\|_r \leq \left\|\mathbf{1}_{B(\frac{4}{j})}|V|^{\frac{1}{2}}u\right\|_2^{\frac{2}{d}}\left\|\mathbf{1}_{B(\frac{4}{j})}V\right\|_{\frac{d-1}{2}}^{\frac{d-1}{d}}\left\|\mathbf{1}_{B(\frac{4}{j})}u\right\|_2^{1-\frac{2}{d}},
\end{equation*}
as required.

As the last step of the proof, we use inequality (E4) to derive an estimate on term $I_{22}$:
\begin{equation*}
\|I_{22}\|_2 \leq C \tau(V_1,0,3\rho)^{\frac{1}{d-1}}\varphi_{N_d^\delta}(\rho)\left\|E_j^{(2)}(u)\right\|_{p}.
\end{equation*}
This estimate and the estimates obtained above imply (\ref{reqineq7}).
\end{proof}

\begin{proof}[Proof of Proposition \ref{fourestlem}]
Estimates (E1) and (E2) follow straightforwardly from Proposition \ref{ourlem}. In order to prove estimate (E3), we introduce the following interpolation function:
\begin{equation*}
F_1(z):=\mathbf{1}_{B(\rho)} \Psi_j |V|^{\frac{d-1}{4}z}\varphi_{N+(\frac{d}{2}-\delta)(1-z)}\left[(-\Delta)^{-\frac{d-1}{2}z}\right]_N \varphi^{-1}_{N+(\frac{d}{2}-\delta)(1-z)} \mathbf{1}_{B(\frac{2}{j} \setminus \frac{1}{j})}, \quad 0 \leq \Real(z) \leq 1.
\end{equation*}
According to Lemma \ref{JKlem}, $\|F_1(i\gamma)\|_{2 \mapsto 2} \leq C_1 e^{c_1|\gamma|}$ for appropriate $C_1$, $c_1>0$. Further, according to Lemma \ref{sawlem2},
\begin{multline}
\notag
\|F_1(1+i\gamma)\|_{\frac{2d}{2d-1} \mapsto 2} \leq C_2 e^{c_2 \gamma^2}\left\|\mathbf{1}_{B(\rho)}|V|^{\frac{d-1}{4}}(-\Delta)^{-\frac{d-1}{2}}\right\|_{\frac{2d}{2d-1} \mapsto 2} \leq \\
\leq C_2 e^{c_2 \gamma^2}\left\|\mathbf{1}_{B(\rho)}|V|^{\frac{d-1}{4}}(-\Delta)^{-\frac{d-1}{4}}\right\|_{2 \mapsto 2}\left\|(-\Delta)^{-\frac{d-1}{4}}\right\|_{\frac{2d}{2d-1} \mapsto 2} \leq \\ \leq C_2 e^{c_2 \gamma^2}\tau(V,x_0,\rho)^{\frac{1}{2}}\left\|(-\Delta)^{-\frac{d-1}{4}}\right\|_{\frac{2d}{2d-1} \mapsto 2}
\end{multline}
for appropriate $C_2$, $c_2>0$, where, clearly, $\|(-\Delta)^{-\frac{d-1}{4}}\|_{\frac{2d}{2d-1} \mapsto 2}<\infty$. Therefore, by Stein's interpolation theorem,
\begin{equation*}
\left\|F_1\left(\frac{2}{d-1}\right)\right\|_{p \mapsto 2} \leq C\tau(V,x_0,\rho)^{\frac{1}{2(d-1)}}.
\end{equation*}
The latter inequality implies (E3). 

The proof of estimate (E4) is similar: it suffices to consider interpolation function
\begin{equation*}
F_2(z):=\mathbf{1}_{B(\rho)} \Psi_j |V|^{\frac{d-1}{4}z}\varphi_{N+(\frac{d}{2}-\delta)(1-z)}\left[(-\Delta)^{-\frac{d-1}{2}z}\right]_N \varphi^{-1}_{N+(\frac{d}{2}-\delta)(1-z)} \mathbf{1}_{B(3\rho \setminus 2\rho)}
\end{equation*}
for $0 \leq \Real(z) \leq 1$.
\end{proof}

\section{Proof of Theorem \ref{sawthm}}

\label{exsect}

\begin{proof}[Proof of Theorem \ref{sawthm}]
Let $u \in Y_V^{\mathcal K}$. Suppose that $u \equiv 0$ in some neighbourhood of $0$. Assume that $\rho>0$ is sufficiently small, so that $\bar{B}(0,2\rho) \subset \Omega$, and let $\eta \in C^\infty(\Omega)$ be such that $\eta \equiv 1$ on $B(0,\rho)$, $\eta \equiv 0$ on $\Omega \setminus B(0,2\rho)$. We may assume, without loss of generality, that $V \geq 1$. The standard limiting argument implies the following identity:
\begin{equation*}
\mathbf{1}_{B(\rho)} u=\mathbf{1}_{B(\rho)} [(-\Delta)^{-1}]_{N} (-\Delta u_\eta).
\end{equation*}
Therefore, we can write
\begin{multline}
\notag
\mathbf{1}_{B(\rho)} \varphi_N V u=\\=\mathbf{1}_{B(\rho)} \varphi_N V [(-\Delta)^{-1}]_N \varphi_{N}^{-1} \mathbf{1}_{B(\rho)} \varphi_N (-\Delta u)+\mathbf{1}_{B(\rho)} \varphi_N V [(-\Delta)^{-1}]_N \varphi_N^{-1}\mathbf{1}^c_{B(\rho)}\varphi_N (-\Delta u_\eta),
\end{multline}
or, letting  $K$  to denote the left hand side and,
respectively, $K_1$  and  $K_2$  the two summands of the right hand side of
the last equality, we rewrite the latter as $$K=K_1+K_2.$$
Note that $K \in L^1(\mathbb R^d)$, as follows from definition of space $Y_V^{\mathcal K}$. Lemma \ref{sawlem1} implies that
\begin{equation*}
\|\mathbf{1}_{B(\rho)} \varphi_N V [(-\Delta)^{-1}]_N \varphi_{N}^{-1}  f\|_{1} \leq C\|\mathbf{1}_{B(\rho)} V (-\Delta)^{-1} f\|_{1} \leq C\beta \|f\|_{1},
\end{equation*}
for all $f \in L^1(\Omega)$,
which implies an estimate on $K_1$:
\begin{equation*}
\|K_1\|_{1} \leq C\beta \|K\|_{1}.
\end{equation*}
In order to estimate $K_2$, we first note that $\mathbf{1}^c_{B(\rho)} (-\Delta u_\eta)=\mathbf{1}_{B(2\rho \setminus \rho)} (-\Delta u_\eta)$. According to Lemma \ref{sawlem1} there exists a constant $\hat{C}>0$ such that
\begin{equation*}
\|\mathbf{1}_{B(2\rho)} \varphi_N V [(-\Delta)^{-1}]_N \varphi_{N}^{-1}\|_{1 \mapsto 1} \leq \hat{C}.
\end{equation*}
Hence,
\begin{equation*}
\|K_2\|_{1} \leq \hat{C}\|\mathbf{1}_{B(2\rho \setminus \rho)} \varphi_N (-\Delta u_\eta)\|_{1} \leq \hat{C}\rho^{-N}\|\Delta u_{\eta}\|_1.
\end{equation*}
Let us choose $\beta>0$ such that $C\beta<1$. Then the estimates above imply
\begin{equation*}
(1-C\beta)\|\mathbf{1}_{B(\rho)} \rho^N \varphi_N u\|_{1} \leq (1-C\beta)\|\rho^N K\|_{1} \leq \|\rho^N K_2\|_{1} \leq \hat{C}\|\Delta u_{\eta}\|_1.
\end{equation*}
Letting $N \to \infty$, we obtain $u \equiv 0$ in $B(0,\rho)$.
\end{proof}

\bibliographystyle{alpha}
\bibliography{unique}

\end{document}